\numberwithin{equation}{section}
\theoremstyle{plain}
\newcommand{\E}{\mathbb{E}}
\newtheorem{lemma}{Lemma}
\newtheorem{theorem}{Theorem}
\newtheorem{proposition}{Proposition}
\newtheorem{remark}{Remark}
\newtheorem{corollary}{Corollary}
\newtheorem{assumption}{Assumption} 
\begin{document}
\begin{center}
  \Large \bf  Optimal Unbiased Estimation for Expected Cumulative Discounted Cost
  \end{center}

\author{}
\begin{center}
Zhenyu Cui\,\footnote{School of Business, Stevens Institute of Technology, Hoboken, NJ-07030, United States of America;
    zcui6@stevens.edu},
    Michael C. Fu\,\footnote{R.H. Smith School of Business, University of Maryland, College Park, MD-20742, United States of America;
    mfu@umd.edu},
    Yijie Peng\,\footnote{Department of Industrial Engineering and Management, Peking University, Beijing, 100871, People's Republic of China;
    pengyijie@pku.edu.cn},
          Lingjiong Zhu\,\footnote{Department of Mathematics, Florida State University, 1017 Academic Way, Tallahassee, FL-32306, United States of America; zhu@math.fsu.edu.}
\end{center}

\begin{center}
 \today
\end{center}

\begin{abstract}
We consider estimating an expected infinite-horizon cumulative discounted cost/reward contingent on an underlying stochastic process by Monte 
Carlo simulation. An unbiased estimator based on truncating the cumulative cost at a random horizon is proposed. Explicit forms for 
the optimal distributions of the random horizon are given, and explicit expressions for the optimal random truncation level are obtained, leading to a full analysis of the bias-variance tradeoff when comparing this new class of randomized estimators with traditional fixed truncation estimators. Moreover, we characterize when the optimal randomized estimator is 
preferred over a fixed truncation estimator by considering the tradeoff between bias and variance. This comparison provides guidance on when to choose randomized estimators over fixed truncation estimators in practice. Numerical experiments substantiate 
the theoretical results.

Keywords: Simulation; unbiased estimation; simulation optimization;  computing budget allocation; cumulative costs
\end{abstract}

\vspace{0.5cm}



\newpage

\section{Introduction\label{s1}}

\emph{Motivation and Problem Formulation.}  
We consider estimating an expected cumulative cost 
\begin{equation*}
\alpha:=\mathbb{E}\left[\int_0^{\infty} g(X_t,t)dt\right],
\end{equation*} 
where $X:=\{X_{t},t\geq 0\}$ is the underlying stochastic process defined
on a metric space $\mathcal{X}$ (e.g. $\mathcal{X}=\mathbb{R}$ or $\mathbb{R}^{d}$),
and $g:\mathcal{X}\times\mathbb{R}_{+}\rightarrow\mathbb{R}$ is a real-valued
function of both the underlying process and time.
This covers the special case $g(X_t, t):=e^{-c t}f(X_t)$, which is frequently used in asset pricing,
where $c>0$ is the discount factor, 
and $\alpha$ is referred to as cumulative discounted cost.
For example, $\alpha$ can be the expected present value of a discounted cumulative cash flow contingent on the future value of an underlying asset, where $c>0$ is the discount factor, $f(X_{t})$ the cash flow rate
contingent on the asset value $X_{t}$ at time $t$.

In finance, this is related to simulating a cumulative (discounted) cash flow of a  stochastic perpetuity (\cite{fox1989simulating,blanchet2011exact}) or a mortgage-backed security (MBS) (\cite{glasserman2003resource}). In steady-state simulation, $\alpha$ corresponds to the expected long-run behavior of the sample time-average, e.g., average waiting time in a queueing system (\cite{whitt2002stochastic}). In project management, this corresponds to the accumulated present value of a project, and is a useful metric to decide between capital projects. This generic form also appears in the optimal lifetime consumption problem studied by \cite{merton1969lifetime}. 

In our setting,  $\alpha$ is assumed to be unavailable in closed form, but Monte Carlo simulation  can be used to estimate $\alpha$. It is computationally infeasible to simulate the cumulative cost over an infinite horizon. Thus, a truncation technique is needed to estimate $\alpha$, and batching is typically used to construct a confidence interval (\cite{alexopoulos2016spsts}). However, truncating  at a fixed horizon  generally leads to bias, which is difficult to quantify in statistical inference. We propose a randomized estimator that truncates at a random horizon to retrieve the unbiasedness. By doing so, an asymptotically valid confidence interval can be obtained by  sampling i.i.d. sample paths of cumulative cost truncated at the random horizon, which can be justified by a classical central limit theorem. 
Since variability is introduced by the random horizon, the unbiasedness of the estimator may come at the cost of a larger variance, which motivates us to ask the following question:
\begin{quote}
\textit{ What is the optimal randomized unbiased estimator, and in what sense is it optimal?}
\end{quote}

Throughout, ``optimal" means that within the class of unbiased estimators constructed based on a randomized truncation level, we are searching for the estimator satisfying some optimal criteria. More specifically, we consider three such criteria common in comparing Monte Carlo estimators: minimizing the variance  with a linear penalty on the computational cost, minimizing variance with a fixed cost, and minimizing the work-variance product (see \cite{glynn1992asymptotic}). 

The proposed estimator truncates the cumulative cost at a random horizon following a distribution independent of the underlying stochastic process. Our goal is to find an optimal distribution for the random horizon.
We consider both a constrained optimization problem where the variance of the estimator is minimized subject to a fixed expected simulation cost/work, and an unconditional optimization problem where we seek to minimize the work-variance product (see e.g. \cite{glynn1992asymptotic}) of the estimator. These are infinite-dimensional functional optimization problems over all possible distributions, which are difficult to solve numerically in general. However, we derive explicit forms for the optimal distributions of the random horizon by using the maximum principle for an optimal control problem (see e.g. \cite{bryson1975applied}). The optimal distributions are in a shifted distribution class.
For a discounted continuous cumulative cost contingent on an exponential L\'{e}vy process, the optimal randomization distributions are shifted exponential distributions. \cite{glynn1983randomized} considers a similar cumulative cost estimation problem; however, since the objective there is to minimize the {\it asymptotic} variance of a randomized (unbiased) estimator, the focus is on asymptotic results, whereas we focus on the finite simulation budget setting. 
In particular, our results show that the optimal randomization distribution in the setting of \cite{glynn1983randomized}  is not necessarily optimal in any of the fixed computational budget settings we consider.

Although the proposed randomized estimator eliminates bias, it inevitably increases the variance. We define a utility function as a linear combination of bias and variance. With a positive weight on the variance, we show that the optimal randomized estimator is less favorable than the fixed truncation estimator when the computational budget is sufficiently small. 
A threshold function of the computational budget for the weight of variance is provided, and the advantage of the optimal randomized estimator can be justified if the weight of the variance is less than this threshold.

\emph{Related Literature.}  
Related work on using randomization in other settings to recover unbiasedness includes 
\cite{mcleish2011general,rhee2015unbiased} for stochastic differential equation (SDE) models. \cite{rhee2015unbiased} use an infinite sum truncated at a random horizon independent of the underlying SDE to obtain an unbiased estimator of the path functionals associated with the SDE. 
They also derive an optimal distribution for the random horizon, but their objective is to minimize the {\it asymptotic} variance of the estimator, which assumes the computational budget goes to infinity, whereas we consider the setting of a fixed (finite) computational budget. 
As a result, solving for the optimal randomization distribution in their setting leads to a discrete optimization problem, whereas our formulation leads to a continuous-time functional optimal control problem that can be solved analytically by applying the maximum principle.
 Other work using randomization to eliminate bias includes unbiased estimation of Markov chain equilibrium expectations (\cite{glynn2014exact}), unbiased stochastic optimization (\cite{blanchet2015unbiased2}), unbiased Bayesian inference (\cite{lyne2015russian}), and unbiased maximum likelihood inference (\cite{jacob2015nonnegative}).  
  None of the previous work provides analysis comparing randomized unbiased Monte Carlo (RUMC) method with traditional Monte Carlo (MC), e.g., a fixed truncation estimator, by taking both bias and variance into account. There is some recent work studying the choice of the optimal randomization distribution in the setting of unbiased estimation involving the solution of a SDE. \cite{cui2019optimal} proposed an approach based on the dual formulation, and \cite{kahale2019optimal} proposed a method based on convex hulls. As far as we know, this is the first work dealing with RUMC from an optimal control perspective. 
 
Closely related to the RUMC method is the multi-level Monte Carlo  (MLMC) method introduced in  \cite{giles2008multilevel,giles2015multilevel}, which combines biased estimators of different step sizes to improve the convergence rate of traditional MC method.
There is also recent interest in combining RUMC and MLMC for developing an unbiased MLMC method,  which can be found in  \cite{vihola2015unbiased,zheng2017clt}.

\emph{Contribution.} 
The contributions of our paper are three-fold:
 \begin{enumerate}
 \item We propose an optimal randomized unbiased estimator for estimating the expected (infinite) cumulative cost/reward in a fixed computational budget setting. 

 \item We provide explicit forms for the optimal randomization distributions balancing the trade-off between variance and computational cost. 
 
 \item We offer theoretical justification for the advantage/disadvantage of RUMC over traditional MC by explicitly considering the tradeoff between bias and variance. 
\end{enumerate}

MLMC focuses on improving convergence rate rather than eliminating bias, and the existing RUMC work only minimizes the asymptotic variance of the randomized estimator.
	To the best of the authors' knowledge, this is the first work resulting in explicit forms for the optimal distribution of an unbiased randomized estimator and comparing randomization with fixed truncation taking both bias and variance into consideration.
 In an example for the general class of exponential L\'{e}vy processes, 
 the explicit forms can be calculated analytically, and the condition that the optimal RUMC outperforms traditional MC is given analytically.
The explicit structure of the distribution function of the optimal random truncation level is particularly useful for ``post-estimation" analysis, and allows us to carry out a full diagnosis of the bias-variance tradeoff.

\emph{Organization.}  
The remainder of the paper is organized as follows. Section \ref{s2} proposes the randomized unbiased estimator with three optimal randomization distributions. We compare the optimal randomized estimator and fixed truncation estimator in Section \ref{s3}.  Numerical experiments are presented in Section \ref{s5}. Section \ref{s6}  concludes the paper.

\section{Unbiased Randomized Estimator\label{s2}}

We consider estimating the cumulative cost/reward:
$\alpha:= \E\left[\int_0^{\infty} g(X_s,s)ds\right]$, 
which is assumed to be well-defined and finite. Here $X:=\{ X_s: s\geq 0 \}$ is the underlying stochastic process. 

Let $N$ be a random horizon independent of the underlying stochastic process $X$, and $Q$ be the distribution of $N$, which satisfies
$$Q\in \mathcal{M}(\mathbb{R}^{+}):=\{ Q: ~ Q(N\geq 0)=1,~ Q(N>s)>0,~s\in[0,\infty) \}.$$
The proposed randomized unbiased estimator is
\begin{equation}
I:=\int_{0}^{\infty}g(X_s, s)\frac{1_{\{N>s\}}}{Q(N>s)}ds=\int_{0}^{N}\frac{g(X_s, s)}{Q(N>s)}ds.
\end{equation}
Note that here in the second expression there is the indicator random variable $1_{\{N>s\}}$, and for the third expression the random variable $N$ appears in the upper integration range. Thus we can name this estimator $I$ also as a random truncation estimator.

Due to the independence of $N$ and $X$, the unbiasedness of the proposed estimator can be established straightforwardly by applying Fubini's theorem:
\begin{equation}
\mathbb{E}\left[\int_{0}^{\infty}g(X_s,s)\frac{1_{\{N>s\}}}{Q(N>s)}ds\right]
=\int_{0}^{\infty}\mathbb{E}[g(X_s,s)]ds=:\alpha.\notag
\end{equation}

In the main body of the paper, we ignore the technicality induced by possible discretization for simulating the continuous-time cost process. 
We consider the following three  optimization problems arising from RUMC:
\begin{enumerate}
\item minimize the variance of the estimator subject to a linear penalty on the computational cost/work;
\item minimize the variance of the estimator subject to a fixed pre-specified level of computational cost/work;
\item minimize the work-variance product of the estimator.
\end{enumerate}

Previous work on RUMC considered three similar optimization problems with the variance replaced by asymptotic variance (\cite{rhee2015unbiased}). 
Large (small) computational cost/work corresponds to small (large) bias.
The three optimizations are natural formulations of the tradeoff between bias and variance.
It turns out that solving the first optimization problem helps solving the succeeding optimization problem(s). Throughout the paper, we assume the expectations and variances of all estimators  are finite and well-defined to avoid the problems of interest becoming meaningless.

\subsection{Minimizing Variance with Penalty}\label{mvp}

We want to optimize over all possible distributions $Q\in\mathcal{M}(\mathbb{R}^{+})$ in order to minimize the variance of the estimator with a penalty for the computational cost:
\begin{equation}
\inf_{Q\in\mathcal{M}(\mathbb{R}^{+})}\left\{\mbox{Var}\left(\int_{0}^{\infty}g(X_s,s)\frac{1_{\{N>s\}}}{Q(N>s)}ds\right)
+\lambda\mathbb{E}^{Q}[N]\right\},~ \lambda>0.\label{prob1}
\end{equation}
This is inherently an infinite-dimensional optimization problem. 
 The following lemma rewrites the optimization problem (\ref{prob1}) in a form that is more amenable for analysis.

\begin{lemma} The optimization problem \eqref{prob1} is equivalent to
\begin{equation}
\inf_{Q\in\mathcal{M}(\mathbb{R}^{+})}
\left\{2\int_{0}^{\infty}\frac{\Gamma(s)}{Q(N>s)}ds+\lambda\int_{0}^{\infty}Q(N>s)ds\right\},~ \lambda>0,\label{pr-cont}
\end{equation}
where
$$\Gamma(s):=\int_{s}^{\infty}\mathbb{E}[g(X_{t},t)g(X_{s},s)]dt.$$
\end{lemma}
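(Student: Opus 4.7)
The plan is to compute both terms in the objective of \eqref{prob1} in closed form in terms of $Q$ and $\Gamma$, and show that the result differs from the integral objective in \eqref{pr-cont} only by an additive constant (namely $-\alpha^{2}$) that is independent of $Q$.

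First I would handle the easy penalty term. Since $N \geq 0$ under $Q$, the tail formula gives
\begin{equation*}
\mathbb{E}^{Q}[N] = \int_{0}^{\infty} Q(N>s)\,ds,
\end{equation*}
which already matches the second integral in \eqref{pr-cont}.

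Next I would tackle the variance. Writing $\mathrm{Var}(I) = \mathbb{E}[I^{2}] - \alpha^{2}$ (the second term being constant in $Q$ by the unbiasedness established just before the lemma), I would expand
\begin{equation*}
I^{2} = \int_{0}^{\infty}\!\!\int_{0}^{\infty} g(X_{s},s)\,g(X_{t},t)\,\frac{\mathbf{1}_{\{N>s\}}\,\mathbf{1}_{\{N>t\}}}{Q(N>s)\,Q(N>t)}\,ds\,dt,
\end{equation*}
and take the expectation. Using Fubini's theorem (justified by the standing finiteness assumption) together with the independence of $N$ and $X$, the expectation splits into a product. The key identity is $\mathbf{1}_{\{N>s\}}\mathbf{1}_{\{N>t\}} = \mathbf{1}_{\{N > \max(s,t)\}}$, so
\begin{equation*}
\frac{\mathbb{E}[\mathbf{1}_{\{N>s\}}\mathbf{1}_{\{N>t\}}]}{Q(N>s)\,Q(N>t)} = \frac{Q(N > \max(s,t))}{Q(N>s)\,Q(N>t)} = \frac{1}{Q(N > \min(s,t))}.
\end{equation*}
The double integrand is symmetric under $s \leftrightarrow t$, so restricting to $\{s \leq t\}$ and doubling yields
\begin{equation*}
\mathbb{E}[I^{2}] = 2\int_{0}^{\infty}\frac{1}{Q(N>s)}\left(\int_{s}^{\infty}\mathbb{E}[g(X_{s},s)g(X_{t},t)]\,dt\right)ds = 2\int_{0}^{\infty}\frac{\Gamma(s)}{Q(N>s)}\,ds,
\end{equation*}
by the definition of $\Gamma$.

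Combining the two pieces gives
\begin{equation*}
\mathrm{Var}(I) + \lambda\,\mathbb{E}^{Q}[N] = 2\int_{0}^{\infty}\frac{\Gamma(s)}{Q(N>s)}\,ds + \lambda\int_{0}^{\infty} Q(N>s)\,ds - \alpha^{2}.
\end{equation*}
Since $\alpha^{2}$ does not depend on $Q$, the infimum over $Q \in \mathcal{M}(\mathbb{R}^{+})$ of the left-hand side is attained at the same minimizer as the infimum of the objective in \eqref{pr-cont}, proving equivalence. The main obstacle is the justification of the Fubini interchange and the manipulation of the squared double integral; this hinges on the blanket finiteness assumption on moments stated just above the lemma, together with non-negativity of the measure factors $1/Q(N>s)$, which allows Tonelli to be invoked on $|g(X_{s},s)g(X_{t},t)|$ before splitting the expectation.
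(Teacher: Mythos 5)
Your proof is correct and follows essentially the same route as the paper: expand the variance as $\mathbb{E}[I^2]-\alpha^2$, apply Fubini together with independence of $N$ and $X$ so that one factor of $Q(N>\cdot)$ cancels against the expectation of the indicator, and then drop the $Q$-independent constant $\alpha^2$. The only cosmetic difference is that you phrase the indicator identity symmetrically via $\min/\max$ before restricting to $\{s\leq t\}$, whereas the paper orders $s<t$ from the outset; this is not a different argument.
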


\begin{proof}
Notice that
\begin{equation}
\mbox{Var}\left(\int_{0}^{\infty}g(X_s,s)\frac{1_{\{N>s\}}}{Q(N>s)}ds\right)
=\mathbb{E}\left[\left(\int_{0}^{\infty}g(X_s,s)\frac{1_{\{N>s\}}}{Q(N>s)}ds\right)^{2}\right]
-\left(\int_{0}^{\infty}\mathbb{E}[g(X_s,s)]ds\right)^{2}.\notag
\end{equation}
By Fubini's theorem and the independence between $X$ and $N$, we have
\begin{align*}
\mathbb{E}\left[\left(\int_{0}^{\infty}g(X_s,s)\frac{1_{\{N>s\}}}{Q(N>s)}ds\right)^{2}\right]
&=2\int_{0}^{\infty}\int_{0}^{t}
\mathbb{E}\left[g(X_s,s)g(X_t,t)\frac{1_{\{N>s\}}1_{\{N>t\}}}{Q(N>s)Q(N>t)}\right]dsdt\notag
\\
&=2\int_{0}^{\infty}\int_{0}^{t}
\mathbb{E}\left[g(X_s,s)g(X_t,t)\frac{1_{\{N>t\}}}{Q(N>s)Q(N>t)}\right]dsdt\notag
\\
&=2\int_{0}^{\infty}\int_{0}^{t}\frac{\mathbb{E}[g(X_s,s)g(X_t,t)]}{Q(N>s)}dsdt
\nonumber
\\
&=2\int_{0}^{\infty}\int_{s}^{\infty}\frac{\mathbb{E}[g(X_s,s)g(X_t,t)]}{Q(N>s)}dtds
=2\int_{0}^{\infty}\frac{\Gamma(s)}{Q(N>s)}ds.
\label{calc1}
\end{align*}
Since $\int_{0}^{\infty}\mathbb{E}[g(X_s,s)]ds$ is independent of $Q$, we can drop it in the optimization, which leads to the conclusion.
\end{proof}
Following a similar procedure in the proof of Lemma 1, we have  $\mathbb{E}\left[\left(\int_{0}^{\infty}g(X_s,s)ds\right)^{2}\right]=2\int_{0}^{\infty}\Gamma(s)ds$.
The random truncation increases the variance by noticing $\int_{0}^{\infty}\Gamma(s)ds\leq \int_{0}^{\infty}\frac{\Gamma(s)}{Q(N>s)}ds$, but the increased variance is compensated by a decreased computational cost. The objective of the optimization problem (\ref{pr-cont}) is a functional of $\Gamma(s)$, $\lambda$, and $Q$. Thus, we expect the optimal randomization distribution $Q^{*}$ for the optimization problem (\ref{pr-cont})  should be determined by $\Gamma(s)$ and $\lambda$. Intuitively, $\Gamma(s)$ captures how fast the cost process $g(X_t,t)$ decays after time $s$, while $\lambda$ is the unit cost for computing the cumulative cost.

\begin{assumption}\label{as1}
 $\Gamma(s)$  is a non-negative and strictly decreasing smooth function.
\end{assumption}

The non-negativity and monotonicity in Assumption \ref{as1} can be justified if the cost process $g(X_s,s)$ is non-negative (or non-positive) and $\mathbb{E}[g(X_{t},t)g(X_{s},s)]$ is non-increasing in $s$.  In the case where the cost process has both positive and negative parts, we can decompose it into the difference of two non-negative processes and estimate the cumulative cost of both processes separately. Under Assumption 1, we have an explicit form for the optimal distribution given in the following theorem.

\begin{theorem}\label{main1}
Under Assumption \ref{as1}, for the optimization problem (\ref{pr-cont}), 
\begin{equation}
Q^{\ast}(N>s)=
\begin{cases}
1 &\mbox{for $s\leq s^{\ast}$},
\\
\sqrt{\frac{2\Gamma(s)}{\lambda}} &\mbox{for $s>s^{\ast}$},
\end{cases}
\end{equation}
where $s^*=\inf\{s\in[0,\infty):~ \Gamma(s)\leq \lambda/2\}$, and the minimum is given by
\begin{align*}
\inf_{Q\in\mathcal{M}(\mathbb{R}^{+})}
\left\{2\int_{0}^{\infty}\frac{\Gamma(s)}{Q(N>s)}ds+\lambda\int_{0}^{\infty}Q(N>s)ds\right\}
=2\int_0^{s^{\ast}} \Gamma(s) ds +2\sqrt{2\lambda} \int_{s^{\ast}}^{\infty} \sqrt{\Gamma(s)} ds +\lambda s^{\ast}.
\end{align*}
\end{theorem}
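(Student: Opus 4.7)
The plan is to recast \eqref{pr-cont} as a pointwise optimization over the survival function $\phi(s):=Q(N>s)$. Admissibility of $Q\in\mathcal{M}(\mathbb{R}^{+})$ translates to $\phi$ being non-increasing on $[0,\infty)$ with $\phi(s)\in(0,1]$, and the objective decouples as
\begin{equation*}
J[\phi]=\int_{0}^{\infty}F(s,\phi(s))\,ds,\qquad F(s,\phi):=\frac{2\Gamma(s)}{\phi}+\lambda\,\phi.
\end{equation*}
The key observation is that if the pointwise minimizer of $F(s,\cdot)$ over $(0,1]$ turns out to be non-increasing in $s$, then it is automatically admissible and achieves the infimum, so the infinite-dimensional problem collapses to a one-parameter calculus problem at each $s$; no maximum-principle or Lagrange-multiplier argument is actually required.

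For fixed $s$, $F(s,\cdot)$ is strictly convex on $(0,\infty)$ with unique stationary point $\phi_{0}(s)=\sqrt{2\Gamma(s)/\lambda}$ and unconstrained minimum value $2\sqrt{2\lambda\Gamma(s)}$. Imposing $\phi\in(0,1]$ yields two regimes. (i) If $\Gamma(s)\le\lambda/2$, then $\phi_{0}(s)\le 1$ is feasible and $\phi^{\ast}(s)=\phi_{0}(s)$. (ii) If $\Gamma(s)>\lambda/2$, then $\phi_{0}(s)>1$; since $\partial F/\partial\phi<0$ throughout $(0,\phi_{0}(s))\supset(0,1]$, the minimum on $(0,1]$ is attained at the boundary $\phi=1$ with value $2\Gamma(s)+\lambda$. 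By Assumption~\ref{as1}, $\Gamma$ is strictly decreasing and continuous, so $s^{\ast}=\inf\{s:\Gamma(s)\le\lambda/2\}$ cleanly separates the two regimes: case (ii) applies on $[0,s^{\ast})$ and case (i) on $[s^{\ast},\infty)$, which gives the piecewise expression for $Q^{\ast}(N>s)$ stated in the theorem.

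It remains to verify admissibility of $\phi^{\ast}$ and to evaluate $J[\phi^{\ast}]$. Monotonicity: $\phi^{\ast}$ is identically $1$ on $[0,s^{\ast}]$ and strictly decreasing on $(s^{\ast},\infty)$ because $\Gamma$ is; continuity at $s^{\ast}$ follows from $\sqrt{2\Gamma(s^{\ast})/\lambda}=1$. Positivity follows from $\Gamma>0$ on $[0,\infty)$. Hence $\phi^{\ast}$ is a valid survival function. Since $F(s,\phi(s))\ge F(s,\phi^{\ast}(s))$ for every admissible $\phi$ and every $s$, integrating gives $J[\phi]\ge J[\phi^{\ast}]$, and a direct split of the integral over $[0,s^{\ast}]$ and $(s^{\ast},\infty)$ yields
\begin{equation*}
J[\phi^{\ast}]=\int_{0}^{s^{\ast}}(2\Gamma(s)+\lambda)\,ds+\int_{s^{\ast}}^{\infty}2\sqrt{2\lambda\Gamma(s)}\,ds=2\int_{0}^{s^{\ast}}\Gamma(s)\,ds+\lambda s^{\ast}+2\sqrt{2\lambda}\int_{s^{\ast}}^{\infty}\sqrt{\Gamma(s)}\,ds,
\end{equation*}
matching the stated value.

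The main obstacle is conceptual rather than computational: one must recognize that the upper bound $\phi\le 1$ (coming from $Q$ being a probability) is precisely what creates the bang-bang cutoff at $s^{\ast}$, and must check that the pointwise-optimal $\phi^{\ast}$ inherits monotonicity from $\Gamma$ \emph{for free} under Assumption~\ref{as1}, so that the pointwise solution is already globally admissible and no coupling across $s$ needs to be enforced. Edge cases --- $s^{\ast}=0$ when $2\Gamma(0)\le\lambda$, and the degenerate $s^{\ast}=+\infty$ (ruled out whenever $\Gamma$ is integrable, since then $\Gamma\to 0$) --- collapse the formula to a single integral and can be checked directly.
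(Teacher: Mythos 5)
Your proposal is correct and follows essentially the same route as the paper's own proof: both reduce \eqref{pr-cont} to pointwise minimization of the integrand $2\Gamma(s)/x+\lambda x$ over $x\in(0,1]$, with the cap at $1$ producing the cutoff at $s^{\ast}$ and the monotonicity of $\Gamma$ guaranteeing admissibility of the pointwise solution. Your write-up is more explicit than the paper's terse argument in verifying that $\phi^{\ast}$ is non-increasing, continuous at $s^{\ast}$, and strictly positive, but the underlying idea is identical.
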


\begin{proof} For the optimization problem (\ref{pr-cont}),  we have
\begin{equation*}
Q^{\ast}(N>s)=\arg\min_x L(x;s,\lambda),
\end{equation*}
where
\begin{equation}
L(x;s,\lambda):=2\frac{\Gamma(s)}{x}+\lambda x\notag.
\end{equation}
 Notice that the function $ L(x;s,\lambda)$ decreases for $x<\sqrt{\frac{2\Gamma(s)}{\lambda}}$
and increases for $x>\sqrt{\frac{2\Gamma(s)}{\lambda}}$.
In addition, for any $Q\in\mathcal{M}(\mathbb{R}^{+})$,
$Q(N>s)$ is required to decrease from $1$ to $0$ as $s$ goes from $0$ to $\infty$. Then, we can calculate
\begin{align*}
\inf_{Q\in\mathcal{M}(\mathbb{R}^{+})}
\left\{2\int_{0}^{\infty}\frac{\Gamma(s)}{Q(N>s)}ds+\lambda\int_{0}^{\infty}Q(N>s)ds\right\}
&=
2\int_{0}^{\infty}\frac{\Gamma(s)}{Q^{\ast}(N>s)}ds+\lambda\int_{0}^{\infty}Q^{\ast}(N>s)ds\notag\\
&=2\int_0^{s^{\ast}} \Gamma(s) ds +2\sqrt{2\lambda} \int_{s^{\ast}}^{\infty} \sqrt{\Gamma(s)} ds +\lambda s^{\ast}. 
\end{align*}
 Combining the arguments above leads to the conclusion.
\end{proof}

\begin{remark} \label{remark1} It is straightforward to establish that if  $\Gamma(0)\leq \lambda/2$, then $s^*=0$ and if  $\Gamma(0)>\lambda/2$, then $s^*\in(0,\infty)$ and $\Gamma(s^*)=\lambda/2$.
We can see the following insight from the explicit form of the optimal distribution $Q^*$: large $\lambda$ and small $\Gamma(s)$ correspond to small $Q^*(N>s)$, which means the distribution of the random truncation concentrates on the domain where $N$ is small. This insight intuitively makes sense. Large unit cost $\lambda$ for computing the cumulative cost favors a small truncation size. Small $\Gamma(s)$ roughly indicates that the cost process  decays fast after time $s$, which thus encourages us to put more computational effort before time $s$. If $\Gamma(s)$ is non-monotone, the optimal $Q^{*}$ can be solved by an optimal control problem, which can be found in the appendix.\end{remark}

In the following, we consider the exponential  L\'{e}vy process, which includes geometric Brownian motion (GBM) as a special case; see e.g. \cite{fu2016option}. Let $Y_t$ be a L\'{e}vy process with the characteristic triplet $(\mu,\sigma^2, \nu)$, and its characteristic exponent is given by $\phi(\cdot)$, which is uniquely characterized by the L\'{e}vy-Khintchine formula: $\mathbb{E}[e^{\beta Y_t}]=e^{t \phi(\beta)}$. Then, $X_{t}=e^{Y_{t}}$ is an exponential L\'{e}vy process. 
Let $g(X_{t},t)=e^{-ct}f(X_{t})$, where
$f(x)=x^{\beta}$ for a fixed $\beta$, so  $g(X_t,t)=e^{-ct}X_t^{\beta}$.

Define
\begin{equation}
\phi_1(\beta):=\phi(\beta)-c, \quad \phi_2(\beta):=\phi(\beta)-2c, \label{def-phi}
\end{equation}
and assume that $\phi_1(\beta)<0$ and $\phi_2(2\beta)<0$ in order for the relevant  integrals to be well-defined.
Then, we have
\begin{equation*}
\mathbb{E}[g(X_t,t)g(X_s,s)]
=\mathbb{E}[e^{-c(t+s)+\beta(Y_t+Y_s)}]
=\mathbb{E}[e^{-c(t-s)+\beta (Y_t-Y_s)}] \mathbb{E}[e^{-2cs+2\beta Y_s}]
=e^{(t-s)\phi_1(\beta)+s\phi_2(2\beta)},\notag
\end{equation*}
and
\begin{equation}
\Gamma(s)=\int_{s}^{\infty}\mathbb{E}[g(X_t,t)g(X_s,s)]dt\\
=\int_{s}^{\infty}e^{(t-s)\phi_1(\beta)+s\phi_2(2\beta)}dt=\frac{1}{|\phi_1(\beta)|}e^{-s|\phi_2(2\beta)|}.\notag
\end{equation}



\begin{corollary}\label{expLevy1}
If $\{X_t\}$ is  an exponential L\'evy process with characteristic exponent $\phi$ and $f(x)=x^{\beta}$, then  
under the optimal randomization distribution $Q^{\ast}$, $N$ is a shifted exponential random variable with the probability density function given by
\begin{equation}
q^{\ast}(s):=
|\phi_2(2\beta)|\sqrt{\frac{1}{2\lambda|\phi_1(\beta)|}}e^{-\frac{1}{2}|\phi_2(2\beta)|s}1_{\{s>s^{\ast}\}},\notag
\end{equation}
where the optimal shift $s^{\ast}$ is given by:
\begin{equation}
s^{\ast}:=
\begin{cases}
0 &\mbox{if $\lambda\geq\frac{2}{|\phi_1(\beta)|}$},\notag
\\
-\frac{1}{|\phi_2(2\beta)|}\log\left(\frac{1}{2}\lambda|\phi_1(\beta)|\right) &\mbox{if $\lambda<\frac{2}{|\phi_1(\beta)|}$},\notag
\end{cases}
\end{equation}
where $\phi_1$ and $\phi_2$ are given by (\ref{def-phi}). 
\end{corollary}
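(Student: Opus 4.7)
The plan is to specialize Theorem \ref{main1} to the exponential L\'{e}vy setting by substituting the explicit expression for $\Gamma(s)$ that has just been computed in the paragraph preceding the corollary, namely $\Gamma(s)=\frac{1}{|\phi_1(\beta)|}e^{-s|\phi_2(2\beta)|}$. Under the standing sign assumptions $\phi_1(\beta)<0$ and $\phi_2(2\beta)<0$, this $\Gamma$ is positive, smooth, and strictly decreasing, so Assumption \ref{as1} is in force and Theorem \ref{main1} applies directly. The whole proof is therefore a two-line algebraic simplification plus a differentiation to pass from the survival function to the density; I expect no conceptual obstacle, only the need to handle the two cases in the definition of $s^{\ast}$ cleanly.

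First I would determine $s^{\ast}$. By Remark \ref{remark1}, $s^{\ast}=0$ precisely when $\Gamma(0)=\frac{1}{|\phi_1(\beta)|}\leq\lambda/2$, i.e.\ when $\lambda\geq 2/|\phi_1(\beta)|$. In the complementary regime $\lambda<2/|\phi_1(\beta)|$ the shift is the unique solution of $\Gamma(s^{\ast})=\lambda/2$, and solving
\begin{equation*}
\frac{1}{|\phi_1(\beta)|}e^{-s^{\ast}|\phi_2(2\beta)|}=\frac{\lambda}{2}
\end{equation*}
for $s^{\ast}$ gives $s^{\ast}=-\frac{1}{|\phi_2(2\beta)|}\log\!\left(\tfrac{1}{2}\lambda|\phi_1(\beta)|\right)$, exactly the formula in the corollary.

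Next I would plug $\Gamma(s)$ into the formula $Q^{\ast}(N>s)=\sqrt{2\Gamma(s)/\lambda}$ for $s>s^{\ast}$ from Theorem \ref{main1}, obtaining
\begin{equation*}
Q^{\ast}(N>s)=\sqrt{\frac{2}{\lambda|\phi_1(\beta)|}}\,\exp\!\left(-\tfrac{1}{2}|\phi_2(2\beta)|\,s\right),\qquad s>s^{\ast},
\end{equation*}
and $Q^{\ast}(N>s)=1$ for $s\leq s^{\ast}$. A quick check at $s=s^{\ast}$ (in the non-trivial case) shows the survival function is continuous there, so $N$ has no atom at $s^{\ast}$ and its distribution is absolutely continuous on $(s^{\ast},\infty)$. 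Differentiating $-Q^{\ast}(N>s)$ with respect to $s$ then produces
\begin{equation*}
q^{\ast}(s)=|\phi_2(2\beta)|\sqrt{\frac{1}{2\lambda|\phi_1(\beta)|}}\,\exp\!\left(-\tfrac{1}{2}|\phi_2(2\beta)|\,s\right)\mathbf{1}_{\{s>s^{\ast}\}},
\end{equation*}
which is the density of a shifted exponential with shift $s^{\ast}$ and rate $\tfrac{1}{2}|\phi_2(2\beta)|$, as claimed. This completes the argument; the only mildly delicate point is verifying continuity of $Q^{\ast}$ at $s^{\ast}$ so that no additional point mass needs to be added to the density, but this is immediate from $\Gamma(s^{\ast})=\lambda/2$.
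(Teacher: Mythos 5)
Your proposal is correct and follows essentially the same route as the paper: apply Theorem~\ref{main1} with $\Gamma(s)=\frac{1}{|\phi_1(\beta)|}e^{-s|\phi_2(2\beta)|}$, solve $\Gamma(s^{\ast})=\lambda/2$ for the shift (splitting on $\lambda\gtrless 2/|\phi_1(\beta)|$ via Remark~\ref{remark1}), and differentiate the survival function to obtain the density. Your extra observation that $Q^{\ast}$ is continuous at $s^{\ast}$ (so there is no atom) is a reasonable refinement the paper leaves implicit, but it does not change the argument.
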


\begin{proof}
Note that for $\lambda\geq 2\Gamma(0)=\frac{2}{|\phi_1(\beta)|}$,
$s^{\ast}=0$. Otherwise, $\Gamma(s^{\ast})=\frac{1}{2}\lambda$,  so that
\begin{equation}
s^{\ast}=-\frac{1}{|\phi_2(2\beta)|}\log\left(\frac{1}{2}\lambda|\phi_1(\beta)|\right).\notag
\end{equation}
We conclude that the optimal $Q^{\ast}$ is given by
\begin{equation}
Q^{\ast}(N>s):=
\begin{cases}
1 &\mbox{for $0\leq s\leq s^{\ast}$},\notag
\\
\sqrt{\frac{2}{\lambda|\phi_1(\beta)|}}e^{-\frac{1}{2}|\phi_2(2\beta)|s} &\mbox{for $s>s^{\ast}$},\notag
\end{cases}
\end{equation}
which completes the proof by differentiation. 
\end{proof}

\subsection{Constrained Optimization}

In this section, we consider the second optimization problem, in which we minimize the variance of the randomized estimator given that the  computational budget is {\it fixed} at a level $m$:
\begin{equation}\label{op1}
\inf_{Q\in\mathcal{M}(\mathbb{R}^{+}): \mathbb{E}^{Q}[N]=m}\left\{\mbox{Var}\left(\int_{0}^{\infty}g(X_s,s)\frac{1_{\{N>s\}}}{Q(N>s)}ds\right)\right\}.
\end{equation}

 An explicit characterization for the optimal distribution is obtained in the following theorem by using the maximum principle of an optimal control problem.

\begin{theorem}\label{main2} Under Assumption \ref{as1}, for the optimization problem (\ref{op1}),\vspace{2mm}
	
	(i) If  $m> \int_{0}^{\infty
	}\sqrt{\Gamma(u)}du/\sqrt{\Gamma(0)}$, then 
	\begin{equation*}
	Q^{\ast}(N>s):=
	\begin{cases}
	1 &\mbox{for $ s\leq s^{\ast}$},
	\\
	\sqrt{\frac{\Gamma(s)}{\Gamma(s^*)}} &\mbox{for $s>s^{\ast}$},
	\end{cases}
	\end{equation*}
	where $s^{\ast}$ is the unique positive solution to the following equation:
	\begin{align*}
	s^\ast +\frac{\int_{s^\ast}^\infty {\sqrt{ \Gamma(u)} du}}{\sqrt{\Gamma(s^\ast)}}&=m,
	\end{align*}
	and the minimum variance is given by
	\begin{align*}
	\inf_{Q\in\mathcal{M}(\mathbb{R}^{+}): \mathbb{E}^{Q}[N]=m}\left\{\mbox{Var}\left(\int_{0}^{\infty}g(X_s,s)\frac{1_{\{N>s\}}}{Q(N>s)}ds\right)\right\}
	=2\int_0^{s^{\ast}}\Gamma(s) ds+2\Gamma(s^{\ast})(m-s^{\ast})-\alpha^2.
	\end{align*}
	
	(ii) If $m\leq \int_{0}^{\infty
	}\sqrt{\Gamma(u)}du/\sqrt{\Gamma(0)}$, then
	\begin{align}\label{opt1}
	Q^*(N>s)&:=\frac{m\sqrt{\Gamma(s)}}{\int_{0}^{\infty}\sqrt{\Gamma(u)}du},
	\end{align}
	and the minimum variance is given by
	\begin{align*}
	\inf_{Q\in\mathcal{M}(\mathbb{R}^{+}): \mathbb{E}^{Q}[N]=m}\left\{\mbox{Var}\left(\int_{0}^{\infty}g(X_s,s)\frac{1_{\{N>s\}}}{Q(N>s)}ds\right)\right\}
	=\frac{2}{m}\left(\int_0^{\infty}\sqrt{\Gamma(u)} du\right)^2-\alpha^2.
	\end{align*}
\end{theorem}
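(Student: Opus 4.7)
The plan is to recognize that, via the calculation inside the proof of Lemma 1, problem \eqref{op1} is equivalent to the convex constrained problem
\begin{equation*}
\inf\left\{\,2\int_{0}^{\infty}\frac{\Gamma(s)}{Q(N>s)}\,ds \;:\; \int_{0}^{\infty}Q(N>s)\,ds=m,\ Q\in\mathcal{M}(\mathbb{R}^{+})\,\right\}-\alpha^{2},
\end{equation*}
since $\mathbb{E}^{Q}[N]=\int_{0}^{\infty}Q(N>s)\,ds$ and the variance differs from $2\int_{0}^{\infty}\Gamma(s)/Q(N>s)\,ds$ only by $\alpha^{2}$. Because $y\mapsto 1/y$ is convex on $(0,1]$ and the budget constraint is linear, this is a convex program, so Lagrangian duality is the natural tool. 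I would introduce a multiplier $\lambda>0$ for the equality constraint, reducing the Lagrangian to exactly the objective of \eqref{pr-cont} in Theorem \ref{main1} plus the constant $-\lambda m$.

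Next, by the pointwise analysis already used in Theorem \ref{main1}, the Lagrangian is minimized for each $s$ by $Q^{\ast}(N>s)=\min\!\bigl(1,\sqrt{2\Gamma(s)/\lambda}\bigr)$, which under Assumption \ref{as1} is decreasing in $s$ and thus a legitimate survival function. It remains to tune $\lambda$ to satisfy the budget constraint, which naturally splits into two cases. If $\lambda\geq 2\Gamma(0)$, the clip is inactive and $Q^{\ast}(N>s)=\sqrt{2\Gamma(s)/\lambda}$ everywhere; the constraint gives $\sqrt{2/\lambda}=m/\int_{0}^{\infty}\sqrt{\Gamma(u)}\,du$, and substituting back, the compatibility condition $\lambda\geq 2\Gamma(0)$ is equivalent to $m\leq\int_{0}^{\infty}\sqrt{\Gamma(u)}\,du/\sqrt{\Gamma(0)}$. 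This yields case (ii) and formula \eqref{opt1}. Otherwise the clip is active on an interval $[0,s^{\ast}]$ with $\lambda=2\Gamma(s^{\ast})$, which gives the two-piece form of case (i), and the budget equation becomes $s^{\ast}+\int_{s^{\ast}}^{\infty}\sqrt{\Gamma(u)/\Gamma(s^{\ast})}\,du=m$.

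To close case (i) I would verify existence and uniqueness of $s^{\ast}$ by showing that $f(s):=s+\int_{s}^{\infty}\sqrt{\Gamma(u)/\Gamma(s)}\,du$ is continuous with
\begin{equation*}
f'(s)=-\frac{\Gamma'(s)}{2\,\Gamma(s)^{3/2}}\int_{s}^{\infty}\sqrt{\Gamma(u)}\,du>0,
\end{equation*}
using $\Gamma'<0$ from Assumption \ref{as1}, together with $f(0)=\int_{0}^{\infty}\sqrt{\Gamma(u)/\Gamma(0)}\,du<m$ (by hypothesis) and $f(s)\geq s\to\infty$. Finally, I would compute the minimum variance in each case by direct substitution of $Q^{\ast}$ into $2\int_{0}^{\infty}\Gamma(s)/Q^{\ast}(N>s)\,ds-\alpha^{2}$: in case (i) the integral splits into $2\int_{0}^{s^{\ast}}\Gamma(s)\,ds+2\sqrt{\Gamma(s^{\ast})}\int_{s^{\ast}}^{\infty}\sqrt{\Gamma(u)}\,du$, and the second piece equals $2\Gamma(s^{\ast})(m-s^{\ast})$ by the defining equation for $s^{\ast}$; in case (ii) one obtains $(2/m)\bigl(\int_{0}^{\infty}\sqrt{\Gamma(u)}\,du\bigr)^{2}$ by direct evaluation.

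The main obstacle is justifying that this pointwise Lagrangian construction produces the global optimum over the infinite-dimensional set $\mathcal{M}(\mathbb{R}^{+})$ subject to the budget equality constraint, rather than merely a stationary point. The cleanest route is a weak-duality argument: for any feasible $Q$, the inequality $2\Gamma(s)/Q(N>s)+\lambda Q(N>s)\geq 2\Gamma(s)/Q^{\ast}(N>s)+\lambda Q^{\ast}(N>s)$ holds pointwise by construction of $Q^{\ast}$, and integrating and using $\int Q\,ds=\int Q^{\ast}\,ds=m$ cancels the $\lambda$ terms and yields the desired bound. This elementary route avoids invoking abstract infinite-dimensional duality and also handles the fact that the clip at $1$ is a binding inequality constraint (the KKT condition $Q^{\ast}(N>s)=1$ on $[0,s^{\ast}]$ is automatic once one verifies that the associated multiplier for the upper bound is non-negative, which follows from $\sqrt{2\Gamma(s)/\lambda}\geq 1$ on that interval).
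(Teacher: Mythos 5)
Your proof is correct, and it takes a genuinely different, arguably cleaner route than the paper's. The paper formulates \eqref{op1} as an infinite-horizon optimal control problem, introduces a Hamiltonian $H(z,u,p,s)=2\Gamma(s)/u+p\,u$ with adjoint variable $p(s)$, derives from the adjoint equation $\dot p=-H_z=0$ that $p(s)\equiv\gamma$ is constant, and then pointwise minimizes the Hamiltonian exactly as you minimize the Lagrangian. Structurally these are the same computation --- your multiplier $\lambda$ is the paper's constant costate $\gamma$ --- but the framing differs: the paper invokes the Pontryagin maximum principle (citing Halkin for the infinite-horizon case), while you use finite-dimensional convex duality applied pointwise. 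What your route buys is a fully elementary and self-contained sufficiency argument: the weak-duality step at the end (pointwise $2\Gamma(s)/Q+\lambda Q\geq 2\Gamma(s)/Q^{\ast}+\lambda Q^{\ast}$, then integrate and cancel $\lambda m$) proves global optimality over $\mathcal{M}(\mathbb{R}^{+})$ directly, whereas the paper's reliance on the maximum principle leaves implicit why the necessary conditions are also sufficient for this particular infinite-horizon problem with a control constraint $u\in[0,1]$ and a terminal state constraint at infinity. You should also note, as you do implicitly and the paper does explicitly, that $Q^{\ast}(N>s)=\min(1,\sqrt{2\Gamma(s)/\lambda})$ is nonincreasing and tends to $0$ under Assumption \ref{as1} (so $Q^{\ast}\in\mathcal{M}(\mathbb{R}^{+})$); the weak-duality inequality is purely pointwise over $(0,1]$ and does not itself enforce monotonicity, so feasibility of $Q^{\ast}$ must be checked separately, as you do. Your verification of uniqueness of $s^{\ast}$ via $f'(s)=-\Gamma'(s)\int_{s}^{\infty}\sqrt{\Gamma(u)}\,du/\bigl(2\Gamma(s)^{3/2}\bigr)>0$ matches the paper's argument (their $G=f-m$) exactly, and the substitution computations of the optimal values are correct.
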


\begin{proof}
Consider the following infinite horizon optimal control problem:
\begin{equation}\label{oc}
\begin{aligned}
\inf_{\{u(s)\in[0,1]: ~s\in[0,\infty)\}}&\left\{\int_{0}^{\infty}\frac{2\Gamma(s)}{u(s)}ds\right\},
\\s.t.\quad & \dot{z}(s)=u(s),\quad s\in[0,\infty),
\\
&z(0)=0,\quad \lim_{t\to\infty}z(t)=m.
\end{aligned}
\end{equation}
We introduce the Hamiltonian:
$$H(z(s),u(s),p(s),s):=\frac{2\Gamma(s)}{u(s)}+p(s) u(s),$$
where $p(s)$ is an adjoint variable.
For $s\in[0,\infty)$, the optimal control $u^*(t)$ satisfies the following maximum principle, see e.g. \cite{halkin1974necessary}:
\begin{equation*}
\begin{cases}
&\mbox{Optimal condition}:~  u^*(s)=\inf_{u\in[0,1]} H(z(s),u,p(s),s), \\
& \mbox{Adjoint equation}:~ \dot{p}(s)=-H_z(z(s),u(s),p(s),s)=0.
\end{cases}
\end{equation*}
  From the adjoint equation, we know there exists $\gamma\in\mathbb{R}$ such that $p(s)\equiv \gamma$ for $s\in[0,\infty)$. For $\gamma\leq 0$, control $u^*(s)\equiv 1$ on $[0,\infty)$ satisfies the optimal condition, but it cannot satisfy the state constraint in (\ref{oc}). Thus, we have $\gamma\in\mathbb{R^+}$.
  As in the proof of Theorem \ref{main1},  the optimal condition implies
\begin{equation*}
u^*(s)=
\begin{cases}
1 &\mbox{for $s\leq s^{\ast}$},
\\
\sqrt{\frac{2\Gamma(s)}{\gamma}} &\mbox{for $s>s^{\ast}$},
\end{cases}
\end{equation*}
 where $s^*=\inf\{s\in[0,\infty):~ \Gamma(s)\leq \gamma/2\}$.  
By the state constraint in (\ref{oc}),
 \begin{equation}
\lim_{t\to\infty}z(t)=\int_{0}^{\infty}u^*(s)ds=s^{\ast}+\int_{s^{\ast}}^{\infty}\sqrt{\frac{2\Gamma(s)}{\gamma}}ds=m,\notag
 \end{equation}
we have $s^*<m$ and
\begin{equation}
\gamma=\frac{2}{(m-s^{\ast})^{2}}\left(\int_{s^{\ast}}^{\infty}\sqrt{\Gamma(u)}du\right)^{2}.\notag
\end{equation}
From Remark \ref{remark1}, we know that if  $\Gamma(0)\leq \gamma/2$, then $s^*=0$, which implies
\begin{equation}
\Gamma(0)\leq \frac{1}{m^{2}}\left(\int_{0}^{\infty
}\sqrt{\Gamma(u)}du\right)^{2};\notag
\end{equation}
if $\Gamma(0)>\gamma/2$, then $s^*\in(0,\infty)$ and $\Gamma(s^*)=\gamma/2$, which implies
\begin{equation}
\Gamma(s^{\ast})=\frac{1}{(m-s^{\ast})^{2}}\left(\int_{s^{\ast}}^{\infty}\sqrt{\Gamma(u)}du\right)^{2},\notag
\end{equation}
or equivalently,
\begin{align*}
s^\ast +\frac{\int_{s^\ast}^\infty {\sqrt{ \Gamma(s)} ds}}{\sqrt{\Gamma(s^\ast)}}=m.
\end{align*}
Define
$$G(s):=s +\frac{\int_{s}^\infty {\sqrt{ \Gamma(s)} ds}}{\sqrt{\Gamma(s)}}-m.$$
We have
$G(m)>0$, and
\begin{align*}
G'(s)=\frac{-\Gamma^{\prime}(s) \int_{s}^{\infty} \sqrt{\Gamma(u)}du}{2\Gamma(s)\sqrt{\Gamma(s)}}>0.
\end{align*}
Thus, equation $G(s)=0$ has a unique solution on $(0,\infty)$ if and only if $G(0)<0$, or equivalently,
\begin{equation}
\Gamma(0)>\frac{1}{m^{2}}\left(\int_{0}^{\infty
}\sqrt{\Gamma(u)}du\right)^{2}.\notag
\end{equation}
 Summarizing the above arguments, the maximum principle and state constraint in (\ref{op1}) offer a unique  $u^*(s)$ on $[0,\infty)$,  which is the optimal control.

Under Assumption \ref{as1}, the optimal control $u^*(s)$ is non-increasing on $[0,\infty)$ and $\lim_{t\to\infty}u^{*}(t)=0$. By noticing that the  optimization (\ref{op1}) is equivalent to
\begin{equation*}
\inf_{Q\in\mathcal{M}(\mathbb{R}^{+}): \mathbb{E}^{Q}[N]=m}\left\{\int_{0}^{\infty}\frac{2\Gamma(s)}{Q(N>s)}ds\right\},
\end{equation*}
and
\begin{align*}
\mathbb{E}^{Q}[N]=\int_{0}^{\infty} Q(N>s) ds=m,
\end{align*}
we know that $Q^*(N>s)=u^*(s)$ on $[0,\infty)$ is the optimal distribution for the optimization problem (\ref{op1}).
The rest of the proof is a straightforward calculation.
\end{proof}
\begin{remark}\label{remark2} Optimization problem (\ref{prob1}) can also be viewed as an optimal control problem but without a state constraint in (\ref{oc}), which is imposed by the computational budget constraint.
When the computational budget is smaller than a threshold, i.e., $m\leq \int_{0}^{\infty
}\sqrt{\Gamma(u)}du/\sqrt{\Gamma(0)} $, we have
 $s^*=0$, so that the distribution $Q^*$ given by (\ref{opt1}) is supported on $\mathbb{R}^{+}$.
Increasing the computational budget $m$ on the range $(0, \int_{0}^{\infty
}\sqrt{\Gamma(u)}du/\sqrt{\Gamma(0)} )$ would make the tail of the distribution $Q^*$ heavier, which indicates that the distribution of the optimal randomization shifts more weight toward the domain when $N$ is large as the computational budget $m$ increases.  When the computational budget is larger than a threshold, i.e., $m> \int_{0}^{\infty
}\sqrt{\Gamma(u)}du/\sqrt{\Gamma(0)}$, we have $s^*>0$, which indicates that the truncation size $N$ would be almost surely larger than a certain threshold under the optimal randomization if the computational budget is larger than a certain threshold.
 \end{remark}

As an illustration, we then show the optimal distribution for the optimization (\ref{op1}) when $X_{t}$ is an exponential L\'{e}vy process.

\begin{corollary}\label{expL}
If $\{X_t\}$ is  an exponential L\'evy process with characteristic exponent $\phi$ and $f(x)=x^{\beta}$, then when $m|\phi_2(2\beta)|\leq 2$, the optimal distribution $Q^{\ast}$ is given by
\begin{equation}
Q^{\ast}(N>s)
=\frac{m}{2}|\phi_2(2\beta)|e^{-\frac{s}{2}|\phi_2(2\beta)|} \notag
\end{equation}
for any $0<s<\infty$.
On the other hand, when $m|\phi_2(2\beta)|>2$,
the optimal $Q^{\ast}$ is given by
\begin{equation*}
Q^{\ast}(N>s):=
\begin{cases}
1 &\mbox{for $ s\leq m-\frac{2}{|\phi_2(2\beta)|}$},
\\
e^{\frac{m}{2}|\phi_2(2\beta)|-1-\frac{s}{2}|\phi_2(2\beta)|} &\mbox{for $s>m-\frac{2}{|\phi_2(2\beta)|}$},
\end{cases}
\end{equation*}
where  $\phi_2$ is given by (\ref{def-phi}). 
\end{corollary}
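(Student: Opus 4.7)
The plan is to apply Theorem \ref{main2} directly to the exponential Lévy setting, using the already-computed closed form
$$\Gamma(s)=\frac{1}{|\phi_1(\beta)|}e^{-s|\phi_2(2\beta)|},$$
which was derived just above Corollary \ref{expLevy1}. So the only real work is to evaluate the universal quantities that appear in Theorem \ref{main2}, namely $\sqrt{\Gamma(s)}$, $\int_{s^\ast}^{\infty}\sqrt{\Gamma(u)}du$, and $\sqrt{\Gamma(0)}$, for this specific $\Gamma$, and then identify the two regimes with the condition $m|\phi_2(2\beta)|\lessgtr 2$.

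First I would compute $\sqrt{\Gamma(u)}=|\phi_1(\beta)|^{-1/2}e^{-u|\phi_2(2\beta)|/2}$ and integrate to get $\int_{0}^{\infty}\sqrt{\Gamma(u)}du=\frac{2}{|\phi_2(2\beta)|\sqrt{|\phi_1(\beta)|}}$, while $\sqrt{\Gamma(0)}=|\phi_1(\beta)|^{-1/2}$. The threshold in Theorem \ref{main2} therefore reads
$$\frac{\int_{0}^{\infty}\sqrt{\Gamma(u)}du}{\sqrt{\Gamma(0)}}=\frac{2}{|\phi_2(2\beta)|},$$
so the two regimes of Theorem \ref{main2} correspond exactly to $m|\phi_2(2\beta)|\leq 2$ and $m|\phi_2(2\beta)|>2$.

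In the first regime, substituting into \eqref{opt1} gives
$$Q^\ast(N>s)=\frac{m\cdot |\phi_1(\beta)|^{-1/2}e^{-s|\phi_2(2\beta)|/2}}{2/(|\phi_2(2\beta)|\sqrt{|\phi_1(\beta)|})}=\frac{m|\phi_2(2\beta)|}{2}e^{-s|\phi_2(2\beta)|/2},$$
which is precisely the claimed form. In the second regime, I would first solve for $s^\ast$ from the defining equation of Theorem \ref{main2}(i). A short calculation shows that for exponential $\Gamma$ the ratio $\int_{s^\ast}^{\infty}\sqrt{\Gamma(u)}du/\sqrt{\Gamma(s^\ast)}$ collapses to the constant $2/|\phi_2(2\beta)|$ (the $e^{-s^\ast|\phi_2(2\beta)|/2}$ factors cancel), so the equation reduces to $s^\ast+2/|\phi_2(2\beta)|=m$, yielding $s^\ast=m-2/|\phi_2(2\beta)|$, which is positive exactly in this regime. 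Then for $s>s^\ast$,
$$Q^\ast(N>s)=\sqrt{\frac{\Gamma(s)}{\Gamma(s^\ast)}}=e^{-(s-s^\ast)|\phi_2(2\beta)|/2}=e^{\frac{m}{2}|\phi_2(2\beta)|-1-\frac{s}{2}|\phi_2(2\beta)|},$$
matching the stated expression.

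There is no real obstacle here: the cancellation of exponentials is what makes the shift $s^\ast$ come out in closed form. The only item worth checking along the way is that $\Gamma(s)=|\phi_1(\beta)|^{-1}e^{-s|\phi_2(2\beta)|}$ satisfies Assumption \ref{as1} (non-negative and strictly decreasing, hence smooth), which is immediate since $|\phi_2(2\beta)|>0$ under the standing integrability assumption $\phi_2(2\beta)<0$. Once those substitutions are in place, the corollary is just a specialization of Theorem \ref{main2} and the proof ends.
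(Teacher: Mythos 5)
Your proof is correct and follows essentially the same route as the paper: specialize $\Gamma(s)=|\phi_1(\beta)|^{-1}e^{-s|\phi_2(2\beta)|}$ into Theorem~\ref{main2}, identify the threshold $\int_0^\infty\sqrt{\Gamma(u)}\,du/\sqrt{\Gamma(0)}=2/|\phi_2(2\beta)|$, and read off $Q^\ast$ in each regime. The only (minor) difference is that in the second regime you solve $s^\ast$ directly from the characterizing equation $s^\ast+\int_{s^\ast}^\infty\sqrt{\Gamma(u)}\,du/\sqrt{\Gamma(s^\ast)}=m$ via the exponential cancellation, whereas the paper first solves for the adjoint constant $\gamma$ and then sets $s^\ast=\Gamma^{-1}(\gamma/2)$; your version is a slight streamlining but the substance is identical.
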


\begin{proof}
Let us recall that $\Gamma(s)=\frac{1}{|\phi_1(\beta)|}e^{-s|\phi_2(2\beta)|}$, and we have
\begin{equation}
\int_{0}^{\infty}\sqrt{\Gamma(u)}du=
\frac{2}{\sqrt{|\phi_1(\beta)|}\cdot|\phi_2(2\beta)|}.\notag
\end{equation}
Therefore, when
\begin{equation}
\frac{1}{|\phi_1(\beta)|}=\Gamma(0)\leq\frac{1}{m^{2}}\left(\int_{0}^{\infty}\sqrt{\Gamma(u)}du\right)^{2}
=\frac{4}{m^{2}|\phi_1(\beta)|\cdot|\phi_2(2\beta)|^{2}},\notag
\end{equation}
the optimal $Q^{\ast}$ is given by
\begin{equation}
Q^{\ast}(N>s)
=\frac{m\sqrt{\Gamma(s)}}{\int_{0}^{\infty}\sqrt{\Gamma(u)}du}
=\frac{m}{2}|\phi_2(2\beta)|e^{-\frac{s}{2}|\phi_2(2\beta)|},~s\in(0,\infty).\notag
\end{equation}
 When $m|\phi_2(2\beta)|>2$,
the optimal $Q^{\ast}$ is given by
\begin{equation}
Q^{\ast}(N>s):=
\begin{cases}
1 &\mbox{for $ s\leq s^{\ast}$},\notag
\\
\sqrt{\frac{2}{\gamma}}\frac{1}{\sqrt{|\phi_1(\beta)|}}e^{-\frac{s}{2}|\phi_2(2\beta)|} &\mbox{for $s>s^{\ast}$},\notag
\end{cases}
\end{equation}
where $s^{\ast}=\Gamma^{-1}(\gamma/2)=\frac{-1}{|\phi_2(2\beta)|}\log(\frac{1}{2}\gamma|\phi_1(\beta)|)$ and
\begin{align*}
\gamma&=\frac{2}{(m-s^{\ast})^{2}}\left(\int_{s^{\ast}}^{\infty}\sqrt{\Gamma(u)}du\right)^{2}
\\
&=\frac{2}{\left(m+\frac{1}{|\phi_2(2\beta)|}\log(\frac{1}{2}\gamma|\phi_1(\beta)|)\right)^{2}}
\frac{1}{|\phi_1(\beta)|}\frac{4}{|\phi_2(2\beta)|^{2}}e^{-s^{\ast}|\phi_2(2\beta)|}
\nonumber
\\
&=\frac{2}{\left(m+\frac{1}{|\phi_2(2\beta)|}\log(\frac{1}{2}\gamma|\phi_1(\beta)|)\right)^{2}}
\frac{1}{|\phi_1(\beta)|}\frac{4}{|\phi_2(2\beta)|^{2}}\frac{1}{2}\gamma|\phi_1(\beta)|,
\nonumber
\end{align*}
which implies that
\begin{equation}
\gamma=\frac{2}{|\phi_1(\beta)|}e^{2-m|\phi_2(2\beta)|},\notag
\end{equation}
and thus
\begin{equation}
s^{\ast}=\Gamma^{-1}(\gamma/2)=\frac{-1}{|\phi_2(2\beta)|}\log\left(\frac{1}{2}\gamma|\phi_1(\beta)|\right)
=m-\frac{2}{|\phi_2(2\beta)|}.\notag
\end{equation}
This completes the proof. 
\end{proof}

\subsection{Minimization of the  Work Variance Product}

In this section, we consider the third optimization  problem, which is to minimize the product
of the variance and the expected value of $N$, i.e.,
\begin{equation}
\inf_{Q\in\mathcal{M}(\mathbb{R}^{+})}\left\{\mbox{Var}\left(\int_{0}^{\infty}g(X_s,s)\frac{1_{\{N>s\}}}{Q(N>s)}ds\right)
\cdot\mathbb{E}^{Q}[N]\right\}.\label{prob3}
\end{equation}

A key observation is that this optimization problem is equivalent to first minimizing
over the variance conditional on $\mathbb{E}^{Q}[N]=m$ and then minimizing over all possible values
of fixed levels $m\geq 0$.  The main idea is that we can first  conditional on the value of $\mathbb{E}^{Q}[N]$, and then exhaust all possible values of $\mathbb{E}^{Q}[N]$ to search for the optimum.  We have the following equivalence in the two optimization problems:\small
\begin{align}
&\inf_{Q\in\mathcal{M}(\mathbb{R}^{+})}\left\{\mbox{Var}\left(\int_{0}^{\infty}g(X_s,s)\frac{1_{\{N>s\}}}{Q(N>s)}ds\right)
\cdot\mathbb{E}^{Q}[N]\right\}\notag
\\
=&\inf_{m\geq 0}\left\{m\cdot
\inf_{Q\in\mathcal{M}(\mathbb{R}^{+}): \mathbb{E}^{Q}[N]=m}\left\{\mbox{Var}\left(\int_{0}^{\infty}g(X_s,s)\frac{1_{\{N>s\}}}{Q(N>s)}ds\right)\right\}\right\}.
\nonumber
\end{align}\normalsize
or equivalently by plugging in the corresponding expressions, i.e.,
\begin{equation}
\inf_{m\geq 0}\left\{m\cdot
\left[2\int_{0}^{\infty}\frac{\Gamma(s)}{Q^{\ast}(N>s)}ds
-\left(\int_{0}^{\infty}\mathbb{E}[g(X_s,s)]ds\right)^{2}\right]\right\}.\notag
\end{equation}

We first establish the following lemma before proving the main result of this section.

\begin{lemma}\label{amazing}
Under Assumption \ref{as1}, 
$$\int_0^{\infty} \Gamma(s)ds >\frac{\alpha^2}{2}.$$
\end{lemma}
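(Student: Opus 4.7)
The plan is very short and essentially reduces to Jensen's inequality applied to a random variable whose second moment coincides with $2\int_0^\infty \Gamma(s)\,ds$.

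First, set $Y := \int_0^\infty g(X_s,s)\,ds$, so that $\alpha = \mathbb{E}[Y]$ by definition. The paper already records, immediately after the proof of Lemma 1, the identity
\begin{equation*}
\mathbb{E}\!\left[\left(\int_0^\infty g(X_s,s)\,ds\right)^2\right] = 2\int_0^\infty \Gamma(s)\,ds,
\end{equation*}
which is simply the $Q(N>s)\equiv 1$ specialization of the Fubini computation used in the proof of Lemma 1. Combining this with the definition of $\alpha$ gives
\begin{equation*}
2\int_0^\infty \Gamma(s)\,ds - \alpha^2 \;=\; \mathbb{E}[Y^2] - (\mathbb{E}[Y])^2 \;=\; \mathrm{Var}(Y) \;\geq\; 0
\end{equation*}
by Jensen's inequality, which immediately delivers the non-strict bound $\int_0^\infty \Gamma(s)\,ds \geq \alpha^2/2$.

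To promote this to the strict inequality stated in the lemma, I need $\mathrm{Var}(Y) > 0$, i.e., I must rule out the degenerate case in which the total cumulative cost $Y$ is almost surely a deterministic constant. This is where Assumption \ref{as1} enters. If $Y$ were a.s.\ equal to $\alpha$, then $\mathrm{Cov}(g(X_s,s),Y)=0$ for every $s\geq 0$, and after decomposing $Y = \int_0^s g(X_t,t)\,dt + \int_s^\infty g(X_t,t)\,dt$ this would force the rigid identity
\begin{equation*}
\Gamma(s) + \int_0^s \mathbb{E}[g(X_s,s)\,g(X_t,t)]\,dt \;=\; \mathbb{E}[g(X_s,s)]\,\alpha, \qquad s\geq 0.
\end{equation*}
Under Assumption \ref{as1}, $\Gamma$ is strictly decreasing, smooth, and (by the integrability implicit in the setup) satisfies $\Gamma(s)\to 0$ as $s\to\infty$; comparing monotonicity and limits in the displayed identity produces a contradiction, so $\mathrm{Var}(Y) > 0$ strictly.

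The main obstacle is exactly this second step: the Jensen reduction is immediate, whereas excluding the equality case requires extracting genuine non-degeneracy of $Y$ from the purely qualitative monotonicity/smoothness hypothesis on $\Gamma$. Once that is done, the lemma follows directly from the displayed variance identity.
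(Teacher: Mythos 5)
Your first step is exactly the paper's proof: combine the Fubini identity
$\mathbb{E}\bigl[\bigl(\int_0^\infty g(X_s,s)\,ds\bigr)^2\bigr] = 2\int_0^\infty \Gamma(s)\,ds$
with the decomposition $\mathbb{E}[Y^2] = (\mathbb{E}[Y])^2 + \mathrm{Var}(Y)$, so that
$2\int_0^\infty \Gamma(s)\,ds = \alpha^2 + \mathrm{Var}(Y)$.
The paper stops there and writes ``$>\alpha^2$'' without comment, implicitly asserting $\mathrm{Var}(Y)>0$. You have correctly spotted that this is where the content of the strict inequality lives.

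However, your attempt to derive $\mathrm{Var}(Y)>0$ from Assumption~\ref{as1} does not close the gap. The identity you display,
\begin{equation*}
\Gamma(s) + \int_0^s \mathbb{E}[g(X_s,s)\,g(X_t,t)]\,dt = \mathbb{E}[g(X_s,s)]\,\alpha,\qquad s\geq 0,
\end{equation*}
is \emph{not} contradictory under Assumption~\ref{as1}. For a concrete counterexample, let $g(X_s,s)$ be the deterministic function $e^{-s}$ (with $X$ playing no role). Then $\Gamma(s) = e^{-s}\int_s^\infty e^{-t}\,dt = e^{-2s}$, which is smooth, positive, and strictly decreasing, so Assumption~\ref{as1} holds. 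Yet $Y = \int_0^\infty e^{-s}\,ds = 1$ is a.s.\ constant, $\mathrm{Var}(Y)=0$, $\alpha = 1$, and $2\int_0^\infty \Gamma(s)\,ds = 1 = \alpha^2$; the displayed identity is satisfied for every $s$ with no contradiction. So the ``monotonicity/limits'' argument you sketch cannot succeed in general: Assumption~\ref{as1} alone does not rule out the degenerate case, and indeed Lemma~\ref{amazing} fails with equality in that case. The honest fix is to add an explicit non-degeneracy hypothesis, namely $\mathrm{Var}\bigl(\int_0^\infty g(X_s,s)\,ds\bigr)>0$, which is what the paper is tacitly assuming. So: same approach as the paper on the main identity, a correct diagnosis of the missing step, but the proposed repair of that step does not go through.
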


\begin{proof}
By definition, we have
\begin{align*}
2\int_{0}^{\infty}\Gamma(s)ds
&=2\int_{0}^{\infty}\int_{s}^{\infty}\mathbb{E}[g(X_s,s)g(X_t,t)]dtds
\\=&2\int_{0}^{\infty}\int_{0}^{t}\mathbb{E}[g(X_s, s)g(X_t,t)]dsdt
=\mathbb{E}\left[\left(\int_{0}^{\infty}g(X_s, s)ds\right)^{2}\right]\\
=& \left(\mathbb{E}\left[\int_{0}^{\infty}g(X_s, s)ds\right]\right)^{2}  + Var\left(\int_{0}^{\infty}g(X_s, s)ds\right)>\alpha^2,
\end{align*}
noticing that $\alpha=\mathbb{E}\left[\int_{0}^{\infty}g(X_s, s)ds\right]$.
This completes the proof. 
\end{proof}

\begin{theorem}\label{theo2} Under Assumption \ref{as1},
 for the optimization problem (\ref{prob3}), 
\begin{equation*}
Q^{\ast}(N>s):=
\begin{cases}
1 &\mbox{for $s\leq s^{\ast\ast}$},
\\
\sqrt{\frac{\Gamma(s)}{\Gamma(s^{**})}} &\mbox{for $s>s^{\ast\ast}$},
\end{cases}
\end{equation*}
where  $s^{\ast\ast}$ is the unique positive solution to the following equation:
\begin{align*}
\frac{\alpha^2}{2}+s^{\ast\ast}\Gamma(s^{\ast\ast})-\int_0^{s^{\ast\ast}} \Gamma(s) ds=0.
\end{align*}
	The minimum  value of the work-variance product is given by
\begin{align*}
\inf_{Q\in\mathcal{M}(\mathbb{R}^{+})}\left\{\mbox{Var}\left(\int_{0}^{\infty}g(X_s,s)\frac{1_{\{N>s\}}}{Q(N>s)}ds\right)
\mathbb{E}^{Q}[N]\right\}
=2(m^{\ast\ast})^2  \Gamma(s^{\ast\ast}),
\end{align*}
where
\begin{align*}
m^{\ast\ast}&=s^{\ast\ast} +\frac{\int_{s^{\ast\ast}}^{\infty} \sqrt{\Gamma(u)}du }{\sqrt{\Gamma(s^{\ast\ast})}}.
\end{align*}
\end{theorem}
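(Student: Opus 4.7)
The plan is to exploit the two-stage decomposition displayed in the text preceding Lemma \ref{amazing}, namely
\begin{equation*}
\inf_{Q\in\mathcal{M}(\mathbb{R}^{+})}\left\{\mbox{Var}(I)\cdot\mathbb{E}^{Q}[N]\right\}=\inf_{m\geq 0}\left\{m\cdot \inf_{Q\in\mathcal{M}(\mathbb{R}^{+}):\,\mathbb{E}^{Q}[N]=m}\mbox{Var}(I)\right\},
\end{equation*}
so that Theorem \ref{main2} reduces the problem to a one-dimensional optimization over $m$. I would split according to the threshold $m_{0}:=\int_{0}^{\infty}\sqrt{\Gamma(u)}du/\sqrt{\Gamma(0)}$ separating the two cases of Theorem \ref{main2}. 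On $[0,m_{0}]$, part (ii) gives a minimum variance of $\frac{2}{m}(\int_{0}^{\infty}\sqrt{\Gamma(u)}du)^{2}-\alpha^{2}$, so the product equals $2(\int_{0}^{\infty}\sqrt{\Gamma(u)}du)^{2}-m\alpha^{2}$, which is strictly decreasing in $m$; its infimum on this interval is attained at the boundary $m=m_{0}$.

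For $m>m_{0}$ I would reparametrize via $s=s^{\ast}(m)\in(0,\infty)$ defined by $m(s):=s+\int_{s}^{\infty}\sqrt{\Gamma(u)}du/\sqrt{\Gamma(s)}$, which Assumption \ref{as1} makes a strictly increasing bijection. Setting $V(s):=2\int_{0}^{s}\Gamma(u)du+2\sqrt{\Gamma(s)}\int_{s}^{\infty}\sqrt{\Gamma(u)}du-\alpha^{2}$ and $F(s):=m(s)V(s)$, the plan is to compute $m'(s)$ and $V'(s)$ explicitly and then observe that in the product $F'=m'V+mV'$ the $2\sqrt{\Gamma(s)}\int_{s}^{\infty}\sqrt{\Gamma(u)}du$ pieces telescope. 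The payoff is a clean factorization
\begin{equation*}
F'(s)=\frac{\Gamma'(s)\int_{s}^{\infty}\sqrt{\Gamma(u)}du}{\Gamma(s)^{3/2}}\cdot H(s),\qquad H(s):=\tfrac{\alpha^{2}}{2}+s\Gamma(s)-\int_{0}^{s}\Gamma(u)du,
\end{equation*}
whose vanishing is exactly the target equation for $s^{\ast\ast}$.

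For existence and uniqueness of the root, I would use $H'(s)=s\Gamma'(s)<0$ (Assumption \ref{as1}), $H(0)=\alpha^{2}/2>0$, and $H(\infty)=\alpha^{2}/2-\int_{0}^{\infty}\Gamma(u)du<0$ by Lemma \ref{amazing}, together with the tail estimate $s\Gamma(s)\to 0$ (which follows from $\int_{s/2}^{s}\Gamma(u)du\geq (s/2)\Gamma(s)$ under monotonicity and integrability); the Intermediate Value Theorem then delivers a unique $s^{\ast\ast}\in(0,\infty)$. Since the prefactor in $F'$ is strictly negative, $F$ decreases on $(0,s^{\ast\ast})$ and increases on $(s^{\ast\ast},\infty)$, so $s^{\ast\ast}$ yields the global minimum in case (i); comparing with the case-(ii) boundary value via continuity at $s^{\ast}=0$ (which coincides with $m=m_{0}$) shows the overall infimum is attained at $s^{\ast\ast}$. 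Finally, using $H(s^{\ast\ast})=0$ to rewrite $V(s^{\ast\ast})=2m(s^{\ast\ast})\Gamma(s^{\ast\ast})$ gives $F(s^{\ast\ast})=2(m^{\ast\ast})^{2}\Gamma(s^{\ast\ast})$, matching the stated minimum.

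The main obstacle I expect is the derivative computation: the clean factorization of $F'$ into a sign-definite prefactor times $H(s)$ is not visually obvious, and one has to be disciplined in tracking exactly which terms cancel between $m'(s)V(s)$ and $m(s)V'(s)$ before the $\alpha^{2}/(2\Gamma(s))$ term survives to produce $H$. Everything else is routine: the monotonicity analysis is standard, and Lemma \ref{amazing} is purpose-built to give the sign of $H(\infty)$.
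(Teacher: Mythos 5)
Your proposal is correct and takes essentially the same route as the paper: the same two-stage decomposition, the same reparametrization from $m$ to $s^*$, and the same factorization of the derivative of $F(s)=m(s)V(s)$ (denoted $K(s^*)$ in the paper) into a sign-definite prefactor times $H(s)=\tfrac{\alpha^2}{2}+s\Gamma(s)-\int_0^s\Gamma(u)\,du$. The only differences are that you supply two details the paper leaves implicit --- the monotonicity bound $\int_{s/2}^s\Gamma(u)\,du\geq(s/2)\Gamma(s)$ used to get $s\Gamma(s)\to 0$, and the explicit sign analysis confirming the unique critical point is a global minimum dominating the case-(ii) boundary value at $s^*=0$.
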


\begin{proof}
We have 
\begin{align*}
&\inf_{m\geq 0}\left\{m\cdot
\left[2\int_{0}^{\infty}\frac{\Gamma(s)}{Q^{\ast}(N>s)}ds
-\left(\int_{0}^{\infty}\mathbb{E}[g(X_s,s)]ds\right)^{2}\right]\right\}\notag
\\
&=\min\bigg\{\inf_{0\leq m\leq\frac{\int_{0}^{\infty}\sqrt{\Gamma(u)}du}{\sqrt{\Gamma(0)}}}
\left\{2m\int_{0}^{\infty}\frac{\Gamma(s)}{Q^{\ast}(N>s)}ds
-m\left(\int_{0}^{\infty}\mathbb{E}[g(X_s,s)]ds\right)^{2}\right\},
\\
&
\inf_{m>\frac{\int_{0}^{\infty}\sqrt{\Gamma(u)}du}{\sqrt{\Gamma(0)}}}
\left\{2m\int_{0}^{\infty}\frac{\Gamma(s)}{Q^{\ast}(N>s)}ds-m\left(\int_{0}^{\infty}\mathbb{E}[g(X_s,s)]ds\right)^{2}\right\}\bigg\}
\\
&=\inf_{s^*\in[0,\infty)}\left(\frac{\int_{s^*}^{\infty}\sqrt{\Gamma(y)}dy}{\sqrt{\Gamma(s^*)}}+s^*\right)
\cdot\left[ 2\int_0^{s^*} \Gamma
(y)dy +2\sqrt{\Gamma(s^*)} \int_{s^*}^{\infty} \sqrt{\Gamma(y)}dy -\alpha^2  \right],
\end{align*}
where the second equality is justified by Theorem \ref{main2}, and we recall that
\begin{align*}
 m =s^\ast +\frac{\int_{s^\ast}^\infty {\sqrt{ \Gamma(s)} ds}}{\sqrt{\Gamma(s^\ast)}}.
\end{align*}
Let
\begin{align*}
K(s^*):=&\left(\frac{\int_{s^*}^{\infty}\sqrt{\Gamma(u)}du}{\sqrt{\Gamma(s^*)}}+s^*\right)
\cdot\left[ 2\int_0^{s^*} \Gamma(s)ds +2\sqrt{\Gamma(s^*)} \int_{s^*}^{\infty} \sqrt{\Gamma(u)}du -\alpha^2  \right].
\end{align*}
Our goal is to minimize $K(s^*)$, and we have that the optimal solution $s^{**}$ must satisfy the first-order condition:
\begin{align*}
0=K^{\prime}(s^*)
=\frac{\Gamma^{\prime}(s^*)}{(\Gamma(s^*))^{\frac{3}{2}}}\int_{s^*}^{\infty} \sqrt{\Gamma(s)} ds \left(\frac{\alpha^2}{2}+s^*\Gamma(s^*)-\int_0^{s^*} \Gamma(s) ds\right).\label{step2}
\end{align*}
 Recall that $\Gamma^{\prime}(s^*)< 0$, and thus the first-order condition $K^{\prime}(s^*)=0$ is equivalent to
\begin{align*}
\frac{\alpha^2}{2}+s^*\Gamma(s^*)-\int_0^{s^*} \Gamma(s) ds=0.
\end{align*}

Denote
\begin{align*}
H(s^*):=\frac{\alpha^2}{2}+s^*\Gamma(s^*)-\int_0^{s^*} \Gamma(s) ds, \quad 0<s^*<\infty,
\end{align*}
and we have $H(0)=\alpha^2/2>0$. Noticing that $\int_0^{\infty} \Gamma(s) ds<\infty$, we have $\lim\limits_{s^*\rightarrow \infty} s^*\Gamma(s^*)=0$. Then,
$$\lim\limits_{s^*\rightarrow \infty}   H(s^*)=\frac{\alpha^2}{2}-\int_0^{\infty} \Gamma(s) ds<0,$$
where the last inequality is due to Lemma \ref{amazing}.  In addition,
\begin{align*}
H^{\prime}(s^*)&:=s^* \Gamma^{\prime}(s^*)<0.
\end{align*}
Thus, there exits a unique solution $s^{**}\in (0,\infty)$ for $K^{\prime}(s^*)=0$, which minimizes $K(s^*)$. Then, we can calculate 
\begin{align*}
&\inf_{m\geq 0}\left\{m
\inf_{Q\in\mathcal{M}(\mathbb{R}^{+}): \mathbb{E}^{Q}[N]=m}\left\{\mbox{Var}\left(\int_{0}^{\infty}g(X_s,s)\frac{1_{\{N>s\}}}{Q(N>s)}ds\right)\right\}\right\}\notag\\
&=m^{\ast\ast}  \left[ 2\int_0^{s^{\ast\ast}} \Gamma(s)ds +2\sqrt{\Gamma(s^{\ast\ast})}\int_{s^{\ast\ast}}^{\infty}\sqrt{\Gamma(u)}du -\alpha^2 \right]\notag\\
&=m^{\ast\ast} \left[ \alpha^2+2s^{\ast\ast}\Gamma(s^{\ast\ast})+2\Gamma(s^{\ast\ast})\cdot (m^{\ast\ast}-s^{\ast\ast})-\alpha^2 \right]\notag\\
&=2(m^{\ast\ast})^2 \Gamma(s^{\ast\ast}),
\end{align*}
where the second equality is justified by the definition of  $s^{\ast\ast}$.
This completes the proof.
\end{proof}

\begin{remark}
From the proof of Theorem \ref{main2}, we know $m$ is increasing with respect to $s^*$. Therefore, there exists a unique $m^*$ such that
\begin{align*}
 m^* =s^{\ast\ast} +\frac{\int_{s^{\ast\ast}}^\infty {\sqrt{ \Gamma(s)} ds}}{\sqrt{\Gamma(s^{\ast\ast})}}>\frac{\int_{0}^{\infty}\sqrt{\Gamma(u)}du}{\sqrt{\Gamma(0)}},
\end{align*}
which is the optimal  computational budget for minimizing the work-variance product. Notice that the support of the optimal distribution $Q^*$ is always shifted away from zero for minimizing the work-variance product.
\end{remark}

\begin{corollary}\label{theo3}
	If $\{X_t\}$ is  an exponential L\'evy process with characteristic exponent $\phi$ and $f(x)=x^{\beta}$, then
	\begin{equation*}
	Q^{\ast}(N>s):=
	\begin{cases}
	1 &\mbox{for $0\leq s\leq s^{**}$},\notag
	\\
	e^{\frac{s^{**}-s}{2}|\phi_2(2\beta)|} &\mbox{for $s>s^{**}$},
	\end{cases}
	\end{equation*}
	where $m^*=s^{**}+\frac{2}{|\phi_2(2\beta)|}$ and $\phi_2$ is defined by (\ref{def-phi}), and $s^{**}$ is the unique positive solution to the following transcendental algebraic equation:
	\begin{align*}
	\frac{1}{|\phi_1(\beta)|}+2s^{**} e^{-s^{**}|\phi_2(2\beta)|}-\frac{2}{|\phi_2(2\beta)|}\left(1-e^{-s^{**}|\phi_2(2\beta)|}\right)=0,
	\end{align*}
	which can be solved in a closed form: 
	\begin{align*}
	s^{**}&=W\left(\frac{e}{2} \left(\frac{2}{|\phi_2(2\beta)|}-\frac{1}{|\phi_1(\beta)|}\right)\right)-\frac{1}{|\phi_2(2\beta)|},
	\end{align*}
	where  $W(\cdot)$ is  the Lambert-W function  and $\phi_1$ and $\phi_2$ are defined by (\ref{def-phi}). Furthermore, 
	\begin{align*}
	m^{*}&=W\left(\frac{e}{2} \left(\frac{2}{|\phi_2(2\beta)|}-\frac{1}{|\phi_1(\beta)|}\right)\right)+\frac{1}{|\phi_2(2\beta)|}.
	\end{align*}
\end{corollary}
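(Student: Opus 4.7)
The plan is to specialize Theorem~\ref{theo2} to the exponential L\'{e}vy setting by computing the two ingredients it requires---$\Gamma(s)$ and $\alpha$---and then manipulating the resulting transcendental equation into closed form via the Lambert-$W$ function. The autocorrelation quantity $\Gamma(s)=\frac{1}{|\phi_1(\beta)|}e^{-s|\phi_2(2\beta)|}$ was already derived just before Corollary~\ref{expLevy1}, so no new work is needed there. For the mean, I would apply Fubini and the L\'{e}vy-Khintchine formula to get
\begin{equation*}
\alpha=\int_0^\infty \E[e^{-cs}X_s^\beta]\,ds=\int_0^\infty e^{s\phi_1(\beta)}\,ds=\frac{1}{|\phi_1(\beta)|},
\end{equation*}
which uses the standing assumption $\phi_1(\beta)<0$.

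Next, I would substitute $\Gamma$ and $\alpha^2=1/|\phi_1(\beta)|^2$ into the defining equation $\frac{\alpha^2}{2}+s^{\ast\ast}\Gamma(s^{\ast\ast})-\int_0^{s^{\ast\ast}}\Gamma(s)\,ds=0$ of Theorem~\ref{theo2}. The elementary integral $\int_0^s\Gamma(t)\,dt=\frac{1}{|\phi_1(\beta)||\phi_2(2\beta)|}(1-e^{-s|\phi_2(2\beta)|})$, followed by clearing a common factor of $2|\phi_1(\beta)|$, recovers exactly the transcendental equation displayed in the statement. The form of $Q^{\ast}(N>s)$ for $s>s^{\ast\ast}$ then drops out immediately from the $\sqrt{\Gamma(s)/\Gamma(s^{\ast\ast})}$ expression in Theorem~\ref{theo2}, producing $\exp\bigl(\tfrac{s^{\ast\ast}-s}{2}|\phi_2(2\beta)|\bigr)$. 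Likewise, the work $m^{\ast}=s^{\ast\ast}+\int_{s^{\ast\ast}}^\infty\sqrt{\Gamma(u)}\,du/\sqrt{\Gamma(s^{\ast\ast})}$ reduces to a single exponential integral and gives $m^{\ast}=s^{\ast\ast}+2/|\phi_2(2\beta)|$, so the closed form for $m^{\ast}$ in the statement follows directly once that for $s^{\ast\ast}$ is established.

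The main obstacle is inverting the transcendental equation in closed form. With $a:=|\phi_2(2\beta)|$ and $b:=|\phi_1(\beta)|$, I would first rearrange it to the compact form $(as^{\ast\ast}+1)e^{-as^{\ast\ast}}=1-\tfrac{a}{2b}$ by grouping the terms containing $e^{-as^{\ast\ast}}$. The substitution $w:=as^{\ast\ast}+1$ then yields $w\,e^{-w}=\tfrac{1}{e}\bigl(1-\tfrac{a}{2b}\bigr)$. Applying the defining relation $W(z)e^{W(z)}=z$ of the Lambert-$W$ function (with the branch chosen so that the resulting $s^{\ast\ast}$ is positive, since the existence proof in Theorem~\ref{theo2} already guarantees a unique positive solution) gives an explicit expression for $w$, and undoing the substitution produces $s^{\ast\ast}$ in terms of $W$ evaluated at $\tfrac{e}{2}\bigl(\tfrac{2}{a}-\tfrac{1}{b}\bigr)$ as claimed. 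The delicate part here is handling the Lambert-$W$ argument carefully so that the algebra lines up with the stated formula; the remaining closed form for $m^{\ast}$ then falls out by simply adding $1/a$ to $s^{\ast\ast}+1/a$.
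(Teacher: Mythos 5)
The overall route is the paper's: specialize Theorem~\ref{theo2} by computing $\Gamma(s)=\tfrac{1}{|\phi_1(\beta)|}e^{-s|\phi_2(2\beta)|}$ and $\alpha=1/|\phi_1(\beta)|$, plug them into the first-order condition to obtain the displayed transcendental equation, read off $Q^\ast$ from $\sqrt{\Gamma(s)/\Gamma(s^{\ast\ast})}$, and compute $m^\ast = s^{\ast\ast}+2/|\phi_2(2\beta)|$ from a single exponential integral. All of that is carried out correctly.

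The gap is in the final step, which you explicitly defer (``the delicate part here is handling the Lambert-$W$ argument carefully so that the algebra lines up with the stated formula''). If one actually carries it out, the algebra does \emph{not} line up. Writing $a=|\phi_2(2\beta)|$, $b=|\phi_1(\beta)|$, your correct intermediate form $we^{-w}=\tfrac{1}{e}\bigl(1-\tfrac{a}{2b}\bigr)$ with $w=as^{\ast\ast}+1$ inverts as $(-w)e^{-w}=-\tfrac{1}{e}\bigl(1-\tfrac{a}{2b}\bigr)$, hence $w=-W_{-1}\!\bigl(\tfrac{1}{e}(\tfrac{a}{2b}-1)\bigr)$ (the $W_{-1}$ branch is forced because $w>1$), giving
\begin{equation*}
s^{\ast\ast}=\frac{w-1}{a}=-\frac{1}{a}\left[1+W_{-1}\!\left(\frac{1}{e}\left(\frac{a}{2b}-1\right)\right)\right],
\end{equation*}
which is not algebraically the same as the displayed $s^{\ast\ast}=W\!\bigl(\tfrac{e}{2}(\tfrac{2}{a}-\tfrac{1}{b})\bigr)-\tfrac{1}{a}$: the argument of $W$ is off by a factor of $e^2/a$ and a sign, and the branch is different. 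This mirrors a slip in the paper's own proof, which asserts that the substitution $y=s^{\ast\ast}+1/a$ converts the transcendental equation into $y\,e^{y}=\tfrac{e}{2}(\tfrac{2}{a}-\tfrac{1}{b})$; carrying out the substitution actually gives $y\,e^{-ay}=\tfrac{1}{2e}(\tfrac{2}{a}-\tfrac{1}{b})$, i.e.\ the exponent should be $-ay$, not $y$. Indeed, for the paper's own numerical parameters ($a=1.1$, $b\approx 0.565$) the displayed closed form returns a \emph{negative} $s^{\ast\ast}\approx -0.85$, contradicting the positivity guaranteed by Theorem~\ref{theo2} and the reported value $s^{\ast\ast}\approx 4.8$, whereas the $W_{-1}$ form above gives $s^{\ast\ast}\approx 4.98$, which does satisfy the transcendental equation. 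So you should not wave this step through: either reproduce the (incorrect) printed formula and flag that it cannot be obtained from the correct $we^{-w}=\tfrac1e(1-\tfrac{a}{2b})$, or replace it with the $W_{-1}$ expression above and note the discrepancy with the corollary as stated.
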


\begin{proof}
In the case of exponential L\'{e}vy process, we have $\Gamma(s)=\frac{1}{|\phi_1(\beta)|}e^{-s|\phi_2(2\beta)|}$, then the first-order condition $K^{\prime}(s^*)=0$ is equivalent to
\begin{align}
\frac{1}{|\phi_1(\beta)|}+2s^* e^{-s^*|\phi_2(2\beta)|}-\frac{2}{|\phi_2(2\beta)|}\left(1-e^{-s^*|\phi_2(2\beta)|}\right)=0.\label{to-solve}
\end{align}
In order to solve \eqref{to-solve}, we denote $y=s^{**}+\frac{1}{|\phi_2(2\beta)|}$, then we can rewrite the algebraic equation into the following equivalent form:
\begin{align}
y\cdot e^y&=\frac{e}{2} \left(\frac{2}{|\phi_2(2\beta)|}-\frac{1}{|\phi_1(\beta)|}\right),\label{to-solve2}
\end{align}
and note that the right-hand side is positive due to the L\'{e}vy-Khintchine theorem and Jensen's inequality, i.e., $2|\phi_1(\beta)|>|\phi_2(2\beta)|$ always holds. By the definition of the Lambert-W function, we can recognize that the solution to \eqref{to-solve2} is given explicitly by $y=W(b)$, 
and here $b:=\frac{e}{2} \left(\frac{2}{|\phi_2(2\beta)|}-\frac{1}{|\phi_1(\beta)|}\right)$. Then we have the desired solutions of $s^{**}$ and $m^*$.  Note that $b>0$, and the Lambert-W function is uniquely defined, then this establishes the uniqueness of $s^{**}$. 
Applying the results of Theorem \ref{theo2} completes the proof. 
\end{proof}

\section{Randomization Vs. Fixed Truncation}\label{s3}

As discussed in the last section, randomization inevitably increases the variance, although it eliminates the bias. Obviously, small bias and variance are desirable in practice. Basically, whether the optimal randomization is favorable or not depends on the tradeoff between bias and variance.
Thus, we consider a utility function as follows:
\begin{align*}
U_w(I_m):=-(\mathbb{E}[I_m]-\alpha)^2-w Var(I_m),\quad w\geq 0,
\end{align*}
where $I_m$ is an estimator of $\alpha$ subject to the computational budget $m$. A large $w$ indicates more weight on the variance and less weight on the bias in the tradeoff of these two factors.
We denote the optimal randomized estimator with computational budget $m$ as $I_m^r$ and the fixed truncation estimator with computational budget $m$ as $I_m^f$ defined by
\begin{align}
I_m^f:=\int_0^m g(X_s,s) ds.\notag
\end{align}
Then, we have the following result.

\begin{proposition}\label{p1} For any $w>0$, when $m$ is sufficiently small,
$$U_w(I_m^r)<U_w(I_m^f)~.$$
\end{proposition}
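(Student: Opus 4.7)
The strategy is to exploit the asymmetry in how the two utilities behave as $m \downarrow 0$: the randomized estimator is unbiased but its variance blows up when the budget shrinks, whereas the fixed truncation estimator has vanishing variance but its bias stays bounded (it saturates at $\alpha^2$). So the plan is to show that $U_w(I_m^r) \to -\infty$ while $U_w(I_m^f)$ stays bounded below, which forces the desired inequality for all sufficiently small $m$.

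First I would use unbiasedness of $I_m^r$ (established in Section~\ref{s2}) to write
\begin{equation*}
U_w(I_m^r) = -w \, \mathrm{Var}(I_m^r).
\end{equation*}
Then I would invoke Theorem~\ref{main2}(ii): for $m \leq \int_0^\infty \sqrt{\Gamma(u)}\,du/\sqrt{\Gamma(0)}$, the optimal randomized estimator satisfies
\begin{equation*}
\mathrm{Var}(I_m^r) = \frac{2}{m}\left(\int_0^\infty \sqrt{\Gamma(u)}\,du\right)^2 - \alpha^2,
\end{equation*}
so $\mathrm{Var}(I_m^r) \to \infty$ as $m \downarrow 0$, and therefore $U_w(I_m^r) \to -\infty$.

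Next I would analyze $U_w(I_m^f)$ as $m \downarrow 0$. By Fubini and continuity of the integral in its upper limit, $\mathbb{E}[I_m^f] = \int_0^m \mathbb{E}[g(X_s,s)]\,ds \to 0$, so the squared bias satisfies $(\mathbb{E}[I_m^f]-\alpha)^2 \to \alpha^2$. For the variance, the same computation used in the proof of Lemma~1 gives
\begin{equation*}
\mathrm{Var}(I_m^f) = 2\int_0^m \int_s^m \mathbb{E}[g(X_s,s)g(X_t,t)]\,dt\,ds - \left(\int_0^m \mathbb{E}[g(X_s,s)]\,ds\right)^2 \leq 2\int_0^m \Gamma(s)\,ds,
\end{equation*}
which tends to $0$ as $m \downarrow 0$ since $\int_0^\infty \Gamma(s)\,ds < \infty$ by Lemma~\ref{amazing}. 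Hence $U_w(I_m^f) \to -\alpha^2$, a finite number.

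Combining the two limits, there exists $m_0 > 0$ such that for all $m \in (0, m_0]$ we have $U_w(I_m^r) < -\alpha^2 - 1 \leq U_w(I_m^f)$, which yields the claim. I do not anticipate a serious obstacle: the two asymptotic regimes are cleanly separated, and the only mild technical point is ensuring $m$ is small enough to be inside the regime covered by Theorem~\ref{main2}(ii) so the explicit variance formula applies — but this is automatic since the threshold $\int_0^\infty \sqrt{\Gamma(u)}\,du/\sqrt{\Gamma(0)}$ is a fixed positive constant.
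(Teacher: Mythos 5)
Your proof is correct and takes essentially the same approach as the paper: both use the explicit variance formula from Theorem~\ref{main2}(ii) (applicable once $m\leq \int_0^\infty\sqrt{\Gamma(u)}\,du/\sqrt{\Gamma(0)}$) to show $U_w(I_m^r)\to-\infty$ as $m\downarrow 0$, and contrast this with $U_w(I_m^f)\to-\alpha^2$, which is finite. The only minor difference is that you spell out $\mathrm{Var}(I_m^f)\to 0$ explicitly via Lemma~1, whereas the paper states the limit of $U_w(I_m^f)$ directly.
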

\begin{proof}
Note that
\begin{align*}
&U_w(I_m^f)=-\left(\mathbb{E}\int_{m}^{\infty}g(X_{s},s)ds\right)^{2}-w\text{Var}\left(\int_{0}^{m}g(X_{s},s)ds\right),
\\
&U_w(I_m^r)=-w \inf_{Q\in\mathcal{M}(\mathbb{R}^{+}):\mathbb{E}^{Q}[N]=m}
\text{Var}\left(\int_{0}^{N}\frac{g(X_{s},s)}{Q(N>S)}ds\right).
\end{align*}
From Theorem \ref{main2}, we have that for  $m\leq \int_{0}^{\infty}\sqrt{\Gamma(u)}du/\sqrt{\Gamma(0)}$,
 $Q^{\ast}(N>s)=\frac{m\sqrt{\Gamma(s)}}{\int_{0}^{\infty}\sqrt{\Gamma(u)}du}$ such that
\begin{align*}
U_w(I_m^r) &=-2 w\int_{0}^{\infty}\frac{\Gamma(s)}{Q^{\ast}(N>s)}ds
+w\left(\int_{0}^{\infty}\mathbb{E}[g(X_{s},s)]ds\right)^{2}
\\
&=-\frac{2 w}{m}\left(\int_{0}^{\infty}\sqrt{\Gamma(u)}du\right)^{2}
+w\left(\int_{0}^{\infty}\mathbb{E}[g(X_{s},s)]ds\right)^{2}.
\nonumber
\end{align*}

When $m$ is sufficiently small, the inequality $m\leq \int_{0}^{\infty}\sqrt{\Gamma(u)}du/\sqrt{\Gamma(0)}$
is satisfied and $U_w(I_m^r)\rightarrow-\infty$ as $m\rightarrow 0$, and
on the other hand, as $m\rightarrow 0$,
we have $U_w(I_m^f)\rightarrow-\left(\mathbb{E}\int_{0}^{\infty}g(X_{s},s)ds\right)^{2}>-\infty$, which proves the conclusion.
 \end{proof}
\begin{remark} The proposition indicates that as long as the variance is of  concern to a practitioner, the optimal randomized estimator would not be favored if the computational budget is small enough.
When $m$ is small, the distribution of $N$
must be very skewed in order to make $\mathbb{E}^{Q}[N]=m$
and the estimator unbiased at the same time.
Specifically, $Q(N>s)$ is small for $s>m$,
which leads to a very large variance, because $Q(N>s)$ appears in the denominator
of the expression $2\int_{0}^{\infty}\frac{\Gamma(s)}{Q(N>s)}ds$.
\end{remark}

In general, we define the following threshold level:
\begin{align*}
w(m):=\frac{\left(\mathbb{E}\int_{m}^{\infty}g(X_{s},s)ds\right)^{2}}{ \inf\limits_{Q\in\mathcal{M}(\mathbb{R}^{+}):\mathbb{E}^{Q}[N]=m}
\mbox{Var}\left(\int_{0}^{N}\frac{g(X_{s},s)}{Q(N>s)}ds\right)-\mbox{Var}\left(\int_{0}^{m}g(X_{s},s)ds\right)},\label{threshold}
\end{align*}
such that $U_w(I_m^r)>U_w(I_m^f)$ for all $0<w\leq w(m)$.  This threshold $w(m)$ represents the maximum weight that a practitioner can put onto the variance such that the optimal randomized estimator is more favorable than the fixed truncation estimator. Similar to the proof in Proposition \ref{p1}, it is straightforward to show that $\lim_{m\to0} w(m)=0$. As discussed previously,
\begin{align*}
\inf\limits_{Q\in\mathcal{M}(\mathbb{R}^{+}):\mathbb{E}^{Q}[N]=m}
\mbox{Var}\left(\int_{0}^{N}\frac{g(X_{s},s)}{Q(N>s)}ds\right)
>\mbox{Var}\left(\int_{0}^{\infty}g(X_{s},s)ds\right).\end{align*}
In the case where $\int_{0}^{m}g(X_{s},s)ds$ and $\int_{m}^{\infty}g(X_{s},s)ds$ are positively correlated, then $\mbox{Var}\left(\int_{0}^{\infty}g(X_{s},s)ds\right)>\mbox{Var}\left(\int_{0}^{m}g(X_{s},s)ds\right)$, which implies that $w(m)$ is well defined and non-negative.

\begin{proposition}\label{p2} If $\{X_t\}$ is  an exponential L\'evy process with characteristic exponent $\phi$ and $f(x)=x^{\beta}$, then
\begin{align}
w(m)= \notag
\begin{cases}
\frac{\Delta_1}{\Delta_2-\Delta_4}, \quad 0<m\leq \frac{2}{|\phi_2(2\beta)|},\\
\frac{\Delta_1}{\Delta_3-\Delta_4}, \quad m>\frac{2}{|\phi_2(2\beta)|},
\end{cases}\notag
\end{align}
where 
\begin{align*}
&\Delta_1:=\frac{e^{-2|\phi_{1}(\beta)|m}}{|\phi_{1}(\beta)|^{2}},\\
&\Delta_2:= \frac{8}{m} \frac{1}{|\phi_1(\beta)||\phi_2(2\beta)|^2}-\frac{1}{|\phi_1(\beta)|^2},\\
&\Delta_3:= \frac{2+2e^{-m|\phi_2(2\beta)|+2}}{|\phi_1(\beta)||\phi_2(2\beta)|}-\frac{1}{|\phi_1(\beta)|^2},\\
& \Delta_4:=\frac{2(1-e^{-m|\phi_{2}(2\beta)|})}{|\phi_{1}(\beta)||\phi_{2}(2\beta)|}
+\frac{2e^{-m|\phi_{2}(2\beta)|}-2e^{-m|\phi_{1}(\beta)|}}{|\phi_{1}(\beta)|(|\phi_{2}(2\beta)|-|\phi_{1}(\beta)|)}
 -\left(\frac{1-e^{-m|\phi_{1}(\beta)|}}{|\phi_{1}(\beta)|}\right)^{2},
\end{align*}
and $\phi_1$ and $\phi_2$ are given by (\ref{def-phi}). 
\end{proposition}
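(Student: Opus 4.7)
The plan is direct: each ingredient of $w(m)$ admits an explicit closed form for the exponential L\'{e}vy specification, so the proof reduces to computing four quantities and matching them to the definitions of $\Delta_{1}, \Delta_{2}, \Delta_{3}, \Delta_{4}$.

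First, by the L\'{e}vy-Khintchine formula, $\E[g(X_{s},s)] = \E[e^{-cs+\beta Y_{s}}] = e^{s\phi_{1}(\beta)} = e^{-s|\phi_{1}(\beta)|}$. Integrating from $m$ to $\infty$ and squaring yields $\bigl(\E\int_{m}^{\infty} g(X_{s},s)ds\bigr)^{2} = e^{-2m|\phi_{1}(\beta)|}/|\phi_{1}(\beta)|^{2} = \Delta_{1}$, the numerator of $w(m)$.

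Second, I evaluate the minimum variance of the randomized estimator via Theorem \ref{main2}, whose two regimes separate at $m = \int_{0}^{\infty}\sqrt{\Gamma(u)}du/\sqrt{\Gamma(0)} = 2/|\phi_{2}(2\beta)|$, where $\Gamma(s) = e^{-s|\phi_{2}(2\beta)|}/|\phi_{1}(\beta)|$ was already derived and $\alpha = 1/|\phi_{1}(\beta)|$. For $m \leq 2/|\phi_{2}(2\beta)|$, the formula $(2/m)\bigl(\int_{0}^{\infty}\sqrt{\Gamma(u)}du\bigr)^{2} - \alpha^{2}$ reduces immediately to $\Delta_{2}$. For $m > 2/|\phi_{2}(2\beta)|$, Corollary \ref{expL} identifies $s^{\ast} = m - 2/|\phi_{2}(2\beta)|$, so $s^{\ast}|\phi_{2}(2\beta)| = m|\phi_{2}(2\beta)| - 2$; substituting into $2\int_{0}^{s^{\ast}}\Gamma(s)ds + 2\Gamma(s^{\ast})(m-s^{\ast}) - \alpha^{2}$ and simplifying the resulting exponentials collapses the expression to $\Delta_{3}$.

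Third, and most computationally involved, I compute the fixed-truncation variance $\mbox{Var}\bigl(\int_{0}^{m} g(X_{s},s)ds\bigr) = \Delta_{4}$. Expanding the second moment via Fubini as $2\int_{0}^{m}\int_{s}^{m}\E[g(X_{s},s)g(X_{t},t)]\,dt\,ds$ and inserting the covariance identity $\E[g(X_{s},s)g(X_{t},t)] = e^{-(t-s)|\phi_{1}(\beta)| - s|\phi_{2}(2\beta)|}$ for $s \leq t$ (already derived earlier), the inner $t$-integral over $[s,m]$ is elementary. The outer $s$-integral then splits into two exponential contributions with rates $|\phi_{2}(2\beta)|$ and $|\phi_{2}(2\beta)| - |\phi_{1}(\beta)|$, producing the first two terms of $\Delta_{4}$; subtracting the squared first moment $\bigl((1-e^{-m|\phi_{1}(\beta)|})/|\phi_{1}(\beta)|\bigr)^{2}$ supplies the third term. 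Combining $\Delta_{1}$ with $\Delta_{2} - \Delta_{4}$ in the small-$m$ regime and with $\Delta_{3} - \Delta_{4}$ in the large-$m$ regime then delivers the asserted expression for $w(m)$. The main obstacle is purely bookkeeping in the $\Delta_{4}$ calculation: four exponential sub-terms with different rates arise and their signs must be tracked carefully; the degenerate boundary $|\phi_{2}(2\beta)| = |\phi_{1}(\beta)|$ can be handled by continuity in the closed-form expression.
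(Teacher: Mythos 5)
Your proposal is correct and follows essentially the same route as the paper: compute $\Delta_1$ from the first moment, obtain $\Delta_2$ and $\Delta_3$ by specializing the minimum-variance formulas of Theorem \ref{main2} and Corollary \ref{expL} to $\Gamma(s)=e^{-s|\phi_2(2\beta)|}/|\phi_1(\beta)|$ (with the regime split at $m=2/|\phi_2(2\beta)|$), and compute $\Delta_4$ by Fubini expansion of the fixed-truncation second moment. The only cosmetic difference is that the paper evaluates $2\int_0^\infty\Gamma(s)/Q^*(N>s)\,ds$ directly by splitting the integral at $s^*$, whereas you substitute $s^*=m-2/|\phi_2(2\beta)|$ into the closed form $2\int_0^{s^*}\Gamma+2\Gamma(s^*)(m-s^*)-\alpha^2$; both collapse to $\Delta_3$.
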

\begin{proof}
Recall that
\begin{equation}
\mathbb{E}[g(X_{t},t)g(X_{s},s)]=e^{(t-s)\phi_{1}(\beta)+s\phi_{2}(2\beta)}.\notag
\end{equation}
and
\begin{equation}
\Gamma(s)=\int_{s}^{\infty}e^{(t-s)\phi_1(\beta)+s\phi_2(2\beta)}dt
=\frac{1}{|\phi_1(\beta)|}e^{-s|\phi_2(2\beta)|}.\notag
\end{equation}
We have
\begin{equation}
\left(\mathbb{E}\int_{m}^{\infty}g(X_{s},s)ds\right)^{2}
=\left(\int_{m}^{\infty}e^{\phi_{1}(\beta)s}ds\right)^{2}
=\Delta_1,\notag
\end{equation}
and
\begin{equation}
\left(\mathbb{E}\int_{0}^{\infty}g(X_{s},s)ds\right)^{2}
=\frac{1}{|\phi_{1}(\beta)|^{2}}.\notag
\end{equation}
In addition,
\begin{align*}
\mbox{Var}\left(\int_{0}^{m}g(X_{s},s)ds\right)
&=\mathbb{E}\left[\left(\int_{0}^{m}g(X_{s},s)ds\right)^{2}\right]
-\left(\mathbb{E}\int_{0}^{m}g(X_{s},s)ds\right)^{2}
\\
&=2\mathbb{E}\left[\int_{0}^{m}\int_{s}^{m}g(X_{s},s)g(X_{t},t)dtds\right]
-\left(\mathbb{E}\int_{0}^{m}g(X_{s},s)ds\right)^{2}
\\
&=2\int_{0}^{m}\int_{s}^{m}e^{(t-s)\phi_{1}(\beta)+s\phi_{2}(2\beta)}dtds
-\left(\int_{0}^{m}e^{s\phi_{1}(\beta)}ds\right)^{2}
=\Delta_4.
\end{align*}

For the optimal randomized estimator, from the result in Corollary 2, we have two cases.
 If $m>2/|\phi_2(2\beta)|$, then the optimal $Q^{\ast}$ is given by
\begin{equation*}
Q^{\ast}(N>s):=
\begin{cases}
1 &\mbox{for $0\leq s\leq m-\frac{2}{|\phi_2(2\beta)|}$},
\\
e^{\frac{m}{2}|\phi_2(2\beta)|-1-\frac{s}{2}|\phi_2(2\beta)|} &\mbox{for $s>m-\frac{2}{|\phi_2(2\beta)|}$},
\end{cases}
\end{equation*}
and\small
\begin{align*}
\mbox{Var}\left(\int_{0}^{N}\frac{g(X_{s},s)}{Q(N>s)}ds\right)
&=2\int_0^{\infty}\frac{\Gamma(s)}{Q^{\ast}(N>s)}ds-\left(\mathbb{E}\int_{0}^{\infty}g(X_{s},s)ds\right)^{2}\notag\\
&=2\int_0^{m-\frac{2}{|\phi_2(2\beta)|}}\frac{\Gamma(s)}{Q^{\ast}(N>s)}ds+2\int_{m-\frac{2}{|\phi_2(2\beta)|}}^{\infty}\frac{\Gamma(s)}{Q^{\ast}(N>s)}ds-\frac{1}{|\phi_1(\beta)|^2}\notag\\
&=2\int_0^{m-\frac{2}{|\phi_2(2\beta)|}}\Gamma(s)ds+2\int_{m-\frac{2}{|\phi_2(2\beta)|}}^{\infty}\frac{\Gamma(s)}{Q^{\ast}(N>s)}ds-\frac{1}{|\phi_1(\beta)|^2} \notag\\
&=\frac{2\left(1-e^{-|\phi_2(2\beta)| m +2}\right)}{|\phi_1(\beta)| |\phi_2(2\beta)|}  + \frac{4   e^{-|\phi_2(2\beta)| m +2}}{|\phi_1(\beta)| |\phi_2(2\beta)|}-\frac{1}{|\phi_1(\beta)|^2}\notag\\
&=\frac{2+2e^{-m|\phi_2(2\beta)|+2}}{|\phi_1(\beta)||\phi_2(2\beta)|}-\frac{1}{|\phi_1(\beta)|^2}.
\end{align*}\normalsize
If $m\leq 2/|\phi_2(2\beta)|$, then the optimal $Q^{\ast}$ is given by
\begin{equation}
Q^{\ast}(N>s)
=\frac{m}{2}|\phi_2(2\beta)|e^{-\frac{s}{2}|\phi_2(2\beta)|}, \notag
\end{equation}
and
\begin{align*}
\mbox{Var}\left(\int_{0}^{N}\frac{g(X_{s},s)}{Q(N>s)}ds\right)
&=2\int_0^{\infty}\frac{\Gamma(s)}{Q^{\ast}(N>s)}ds-\left(\mathbb{E}\int_{0}^{\infty}g(X_{s},s)ds\right)^{2}\notag\\
&=2\int_0^{\infty}\frac{\Gamma(s)}{Q^{\ast}(N>s)}ds-\frac{1}{|\phi_1(\beta)|^2}\notag=\Delta_2~.
\end{align*}
Then, it is straightforward to prove the conclusion.
\end{proof}

Next we offer an explicit expression for $w(m)$ for 
the Cox-Ingersoll-Ross (CIR) process, which is an affine stochastic process not belonging to the class of exponential L\'{e}vy processes.
The CIR process is governed by the following SDE:
\begin{align*}
dX_t&=\kappa(\theta-X_t)dt+\sigma \sqrt{X_t}dW_t,
\end{align*}
where $W_t$ is a standard Brownian motion. The CIR process is mean reverting to $\theta$, and $\kappa$ governs the speed of the mean reversion. According to the calculation in the appendix, we have 
\begin{align*}
\Gamma(s)&= A e^{-2cs}+ B e^{-(\kappa +2c )s} +C e^{-2(\kappa+c)s},
\end{align*}
where
\begin{align*}
A&:= \frac{\theta^2}{c}+\frac{\theta\sigma^2}{2\kappa(\kappa+c)},    \notag\\
B&:=  \frac{1}{\kappa+c }(X_0-\theta)\left(\theta+\frac{\sigma^2}{\kappa}\right) +\frac{\theta(X_0-\theta)}{c},  \notag\\
C&:= \frac{1}{\kappa+c }\left( (\theta-X_0)^2 +\frac{\sigma^2}{2\kappa}(\theta-2X_0) \right).
\end{align*}

\begin{proposition}\label{cir2} For the CIR process, we have
	\begin{align*}
	w(m)
	=
	\begin{cases}
	\frac{\left( \frac{\theta}{c}e^{-cm}+\frac{X_0-\theta}{c+\kappa}e^{-(c+\kappa)m} \right)^2}{\frac{2}{m}\left(\int_0^{\infty} \sqrt{\Gamma(u)}du\right)^2-\alpha^2-\Delta'_2},   \text{ if } 0<m\leq \frac{\int_{0}^{\infty
	}\sqrt{\Gamma(u)}du}{\sqrt{\Gamma(0)}};\\
	\frac{\left( \frac{\theta}{c}e^{-cm}+\frac{X_0-\theta}{c+\kappa}e^{-(c+\kappa)m} \right)^2}{\Delta'_1-\Delta'_2},   \text{ if } m> \frac{\int_{0}^{\infty
	}\sqrt{\Gamma(u)}du}{\sqrt{\Gamma(0)}},
	\end{cases}
	\end{align*}
	where
	\begin{align*}
\Delta'_1&:= 2\left(A \frac{1-e^{-2cs^{\ast}}}{2c}+ B \frac{1-e^{-(\kappa +2c )s^{\ast}}}{\kappa +2c} +C \frac{1-e^{-2(\kappa+c)s^{\ast}}}{2(\kappa+c)}\right)\notag\\
	&\quad \quad +2(m-s^{\ast})(A e^{-2cs^{\ast}}+ B e^{-(\kappa +2c )s^{\ast}} +C e^{-2(\kappa+c)s^{\ast}})-\alpha^2;\notag\\
	\Delta'_2&:= \frac{\theta^2}{c}(1-e^{-cm})^2+ (X_0-\theta)\left(\theta+\frac{\sigma^2}{\kappa}\right)\frac{2}{c+\kappa}\\
	&\quad \times \left(\frac{1-e^{-(2c+\kappa)m}}{2c+\kappa}-\frac{e^{-(c+\kappa)m}-e^{-(2c+\kappa)m}}{c}\right)\notag\\
	&\quad + \frac{2\theta(X_0-\theta)}{c} \left(\frac{1-e^{-(2c+\kappa)m}}{2c+\kappa}-\frac{e^{-cm}-e^{-(2c+\kappa)m}}{\kappa+c}\right)\notag\\
	&\quad + \left( (\theta-X_0)^2 +\frac{\sigma^2}{2\kappa}(\theta-2X_0) \right) \frac{(1-e^{-(\kappa+c)m})^2}{\kappa+c}
	\\
	&\quad \quad +\frac{\theta\sigma^2}{\kappa(c+\kappa)}\left(\frac{1-e^{-2c m}}{2c}-\frac{e^{-(c+\kappa)m}-e^{-2cm}}{c-\kappa}\right)
	-\left( (X_0-\theta)\frac{1-e^{-(c+\kappa)m}}{c+\kappa} +\theta \frac{1-e^{-cm}}{c} \right)^{2},
	\end{align*}
	and $\Gamma(0)=A+B+C$. Here
 $s^{\ast}$ is the unique positive solution to the following equation:
	\begin{align*}
	s^\ast +\frac{\int_{s^\ast}^\infty {\sqrt{ \Gamma(s)} ds}}{\sqrt{\Gamma(s^\ast)}}&=m.
	\end{align*}
\end{proposition}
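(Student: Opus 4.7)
The plan is to evaluate, in closed form, the three quantities entering the definition of $w(m)$: the squared bias of the fixed truncation estimator, the variance $\text{Var}(I_m^f)$, and the minimum randomized variance. Substituting these into
$$w(m)=\frac{\(\E\int_m^{\infty}g(X_s,s)ds\)^{2}}{\text{Var}(I_m^r)-\text{Var}(I_m^f)}$$
with $g(X_s,s)=e^{-cs}X_s$ will produce the two-case expression. The squared bias is the easy piece: the CIR mean $\E[X_s]=\theta+(X_0-\theta)e^{-\kappa s}$ immediately gives $\E\int_m^{\infty}g(X_s,s)ds=\frac{\theta}{c}e^{-cm}+\frac{X_0-\theta}{c+\kappa}e^{-(c+\kappa)m}$, which matches the numerator in both regimes of the proposition.

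For the randomized variance in the denominator, I would invoke Theorem~\ref{main2} with $\Gamma(s)=Ae^{-2cs}+Be^{-(\kappa+2c)s}+Ce^{-2(\kappa+c)s}$ as computed in the appendix. When $m\le \int_0^{\infty}\sqrt{\Gamma(u)}du/\sqrt{\Gamma(0)}$, part~(ii) directly gives $\text{Var}(I_m^r)=\frac{2}{m}\(\int_0^{\infty}\sqrt{\Gamma(u)}du\)^{2}-\alpha^2$, accounting for the first case. Otherwise part~(i) gives $\text{Var}(I_m^r)=2\int_0^{s^{\ast}}\Gamma(s)ds+2\Gamma(s^{\ast})(m-s^{\ast})-\alpha^2$; integrating each of the three exponential components of $\Gamma$ over $[0,s^{\ast}]$ and collecting terms produces exactly $\Delta'_1$, with $s^{\ast}$ characterized implicitly by the equation stated in Theorem~\ref{main2}(i) and in the proposition.

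The remaining work, and the main obstacle, is the bookkeeping for $\text{Var}(I_m^f)=\Delta'_2$. Fubini gives $\text{Var}(I_m^f)=2\int_0^m\int_s^m \E[g(X_s,s)g(X_t,t)]dtds-\(\int_0^m\E[g(X_s,s)]ds\)^2$. The joint moment $\E[X_sX_t]$ for $s\le t$ follows from the Markov property $\E[X_t\mid X_s]=\theta+(X_s-\theta)e^{-\kappa(t-s)}$ combined with the standard CIR variance $\text{Var}(X_s)=\frac{X_0\sigma^2}{\kappa}(e^{-\kappa s}-e^{-2\kappa s})+\frac{\theta\sigma^2}{2\kappa}(1-e^{-\kappa s})^2$. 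After multiplying by $e^{-c(s+t)}$, the integrand is a finite sum of exponentials, and the iterated integrals produce one term of $\Delta'_2$ per exponential. The delicate point is the denominator $c-\kappa$ appearing from the inner integration against the rate $\kappa$, which at the resonance $c=\kappa$ must be interpreted via the limit $\lim_{\kappa\to c}(e^{-(c+\kappa)m}-e^{-2cm})/(c-\kappa)=-me^{-2cm}$; modulo this removable singularity, collecting all contributions and subtracting the square of the mean integral reproduces the stated decomposition of $\Delta'_2$ term by term. Substituting the three closed-form ingredients into $w(m)$ under the two regimes of $m$ completes the proof.
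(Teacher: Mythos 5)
Your plan is correct and follows essentially the same route as the paper's proof: compute the bias from $\E[X_s]=\theta+(X_0-\theta)e^{-\kappa s}$, read off the two-regime randomized variance from Theorem~\ref{main2} applied to $\Gamma(s)=Ae^{-2cs}+Be^{-(\kappa+2c)s}+Ce^{-2(\kappa+c)s}$, and obtain $\Delta'_2$ by Fubini from the CIR cross-moment $\E[X_sX_t]$, which your Markov-property calculation indeed reproduces. Your observation that the $c-\kappa$ denominator in $\Delta'_2$ is a removable singularity (with limit $-me^{-2cm}$ at $c=\kappa$) is a useful clarifying detail that the paper leaves implicit.
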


The proof of Proposition \ref{cir2} can be found in the Appendix. 

\subsection*{MSE Comparison for Exponential L\'{e}vy Process}

Define
\begin{align*}
MSE_1&:=Var(I_m)+(\mathbb{E}[I_m]-\alpha))^2,\notag\\
MSE_2&:=Var(I_N).
\end{align*}
The randomized estimator is unbiased, so its MSE is just  its variance.

\begin{proposition}\label{compare}
For the exponential L\'{e}vy process, for all $0<m<\infty$,
 $$MSE_1<MSE_2.$$
\end{proposition}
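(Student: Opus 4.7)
The strategy is to substitute the explicit formulas from Proposition~2 and verify the inequality by direct manipulation. Write $a = |\phi_1(\beta)|$ and $b = |\phi_2(2\beta)|$; the bound $b \leq 2a$ holds by convexity of the characteristic exponent $\phi$ (Jensen). By Proposition~2, $MSE_1 = \Delta_4 + \Delta_1$, and $MSE_2 = \Delta_2$ when $mb \leq 2$ (Case 1), $MSE_2 = \Delta_3$ when $mb > 2$ (Case 2); thus the claim is equivalent to $w(m) < 1$ for all $m > 0$. Combining the $1/a^2$ terms via the identity $-1 - e^{-2am} + (1-e^{-am})^2 = -2 e^{-am}$ and multiplying through by $a/2$, the inequality reduces in each case to a scalar claim:
\[
F(m) := \tfrac{4}{m b^2} - \tfrac{e^{-am}}{a} - \tfrac{1-e^{-bm}}{b} - \tfrac{e^{-bm} - e^{-am}}{b-a} > 0 \quad (mb \leq 2),
\]
\[
G(m) := \tfrac{(e^2+1) e^{-bm}}{b} - \tfrac{e^{-bm} - e^{-am}}{b-a} - \tfrac{e^{-am}}{a} > 0 \quad (mb > 2).
\]

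I would first prove Case 2. Since $e^{-am}, e^{-bm} \to 0$ as $m \to \infty$, one has $G(m) \to 0$; hence it suffices to show $\partial_m G(m) \leq 0$ for every $m > 0$. After differentiating and rearranging, this reduces to
\[
e^{-bm}\bigl[a - (b-a)e^2\bigr] \leq (2a - b)\, e^{-am},
\]
up to a sign flip when $b < a$. Each of the three sub-cases $b > a$, $b = a$ (with the term $(e^{-bm} - e^{-am})/(b-a)$ read via the L'Hopital limit $-me^{-am}$), and $b < a$ unwinds using the elementary observation $(b-a)(e^2-1) \geq 0 \iff b \geq a$ together with $b \leq 2a$; these bound the critical ratio $(2a-b)/[a-(b-a)e^2]$ against $e^{(a-b)m}$, giving the required sign. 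Hence $G(m) > 0$ for all finite $m$, establishing Case 2.

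For Case 1, direct computation gives $F(2/b) = G(2/b)$ (both formulas coincide at the transition point, consistent with continuity of $\mathrm{Var}(I_N)$ there), which is positive by the Case 2 result. Since $F(m) \to +\infty$ as $m \to 0^+$ (from the $4/(m b^2)$ term), it suffices to verify $\partial_m F(m) \leq 0$ on $(0, 2/b]$, for then $F(m) \geq F(2/b) > 0$. Direct differentiation yields $\partial_m F(m) = -4/(m^2 b^2) + [(b-2a)e^{-am} + a e^{-bm}]/(b-a)$, and the key bound $[(b-2a)e^{-am} + a e^{-bm}]/(b-a) \leq e^{-am}$ (which holds by $a e^{-bm} \leq a e^{-am}$ when $b > a$ and with opposite sign flip when $b < a$), combined with the trivial estimate $m^2 b^2 e^{-am} \leq 4$ on $(0, 2/b]$, yields monotonicity.

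The main obstacle is the sign bookkeeping in the monotonicity verification, chiefly for $\partial_m G \leq 0$: multiplication by $(b-a)$ preserves or reverses inequalities depending on whether $b \geq a$ or $b < a$, and the edge case $b = a$ requires a limiting interpretation of the quotient $(e^{-bm}-e^{-am})/(b-a)$. The unifying technical tool is the combination of the Jensen-type bound $b \leq 2a$ with the algebraic fact $(b-a)(e^2-1) \geq 0 \iff b \geq a$, which controls every sign and magnitude condition arising in the argument.
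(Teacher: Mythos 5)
Your proposal is correct, but it proceeds by a genuinely different route than the paper. After reducing to the scalar claims $F(m)>0$ (for $mb\leq 2$) and $G(m)>0$ (for $mb>2$) --- which agree with the paper's starting point after the $-1-e^{-2am}+(1-e^{-am})^2=-2e^{-am}$ simplification --- the paper argues by a \emph{direct chain of algebraic bounds}: in each case it groups the surviving terms by $e^{-am}$ and $e^{-bm}$, invokes $b<2a$ to get $\frac{1+e^2}{b}>\frac{1}{a}$ (Case~2) or $\frac{2}{b}>\frac{1}{a}$ and $e^{-bm}<1$ (Case~1), and arrives in both cases at the common upper bound $\frac{2}{a}\bigl(\frac{1}{a}-\frac{1}{b-a}\bigr)(e^{-am}-e^{-bm})$, whose sign is then decided by a $b\gtrless a$ split. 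You instead run a \emph{monotonicity argument in the budget $m$}: you observe $G(\infty)=0$ and $G'(m)<0$ everywhere, giving $G>0$ on $(0,\infty)$; then you note $F(2/b)=G(2/b)$ (correctly, reflecting continuity of $\mathrm{Var}(I_N)$ across the $s^\ast=0$ threshold) and establish $F'\leq 0$ on $(0,2/b]$ via $m^2b^2 e^{-am}\leq 4$, so that $F(m)\geq F(2/b)>0$. Both arguments rest on the same two ingredients --- the Jensen bound $b<2a$ and a $b\gtrless a$ sign split --- and the derivative computation $G'(m)\leq 0\iff [a-(b-a)e^2]e^{-bm}\leq(2a-b)e^{-am}$ is carried out correctly in your write-up (with the indicated sign flip for $b<a$). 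What your route buys is a clean structural reduction: Case~1 becomes a corollary of Case~2 plus a one-inequality monotonicity check, and the asymptotic behaviour $G(\infty)=0$ explains \emph{why} the inequality holds (the randomized estimator's excess MSE decays to zero but from above). What the paper's route buys is avoiding calculus entirely and collapsing both cases onto a single final factored expression. One cosmetic point to tighten: you state ``it suffices to show $\partial_m G\leq 0$,'' which alone only yields $G\geq 0$; to get the strict inequality in the proposition you should note (as your own sub-case analysis in fact delivers) that $G'<0$ strictly, so $G$ strictly decreases to $0$ and hence $G(m)>0$ for every finite $m$, and similarly $F'<0$ on $(0,2/b]$ since $4/(m^2b^2)\geq 1> e^{-am}$ there.
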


The proof of Proposition \ref{compare} can be found in the Appendix. 

\section{Numerical Experiments\label{s5}}

In this section, we present numerical experiments illustrating the main theoretical results. We choose the form of the discounted cost to be $g(X_s, s)=e^{-c s}f(X_s)$ with $c>0$ denoting the continuous discount rate. Here $f(X_s)$ refers to the \textit{cost} or \textit{reward} at time $s$, and we adopt the form $f(x):=x^{\beta}$ motivated from the power utility function commonly used in decision theory and economics. As for the choice of the underlying stochastic process $X$, we consider geometric Brownian motion (GBM) and the Cox-Ingersoll-Ross (CIR) process, which were
also considered by \cite{rhee2015unbiased} in their numerical experiments. 

We first consider the  example of discounted cost $g(X_s, s)=e^{-c s}X_s^{\beta}$ ($c>0$) with $X_s$ being the geometric Brownian motion (GBM) model, which is a special case of the exponential L\'{e}vy process.  The GBM model is governed by the following SDE:
\begin{align}
dX_t&=\mu X_t dt+\sigma X_t dW_t, \quad X_0=x_0.\notag
\end{align}
Then, we have  $\phi(\beta)=\left(\mu-\frac{\sigma^2}{2}\right)\beta+\frac{\sigma^2}{2}\beta^2$. 
In addition, we can compute 
\begin{align}
\Gamma(s)=\frac{x_0^{2\beta}}{|\phi_1(\beta)|}e^{-s|\phi_2(2\beta)|},\notag
\end{align}
and
\begin{align*}
\alpha=\int_0^{\infty} e^{-cs} x_0^{\beta}\mathbb{E}[ e^{ s \phi(\beta)}  ] ds=\frac{x_0^{\beta}}{c-\phi(\beta)}=\frac{x_0^{\beta}}{|\phi_1(\beta)|}.
\end{align*}

From Corollary \ref{theo3}, we know that the optimal randomization distribution is a shifted exponential distribution. The survival function of the shifted exponential distribution family is given by
\begin{equation*}
Q(N>s)=
\begin{cases}
1 &\mbox{for $s\leq \delta$},
\\
e^{-\eta (s-\delta)} &\mbox{for $s>\delta$}.
\end{cases}
\end{equation*}

The variance-work product of the shifted exponential distribution family can be expressed as a function of $\delta$ and $\eta$:
\begin{align*}
p(\delta,\eta)&:=\mbox{Var}\left(\int_{0}^{\infty}g(X_s,s)\frac{1_{\{N>s\}}}{Q(N>s)}ds\right)
\cdot\mathbb{E}^{Q}[N]\\
&=\left(2\int_0^{\infty}\frac{\Gamma(s)}{Q(N>s)}ds -\alpha^2\right)\cdot \int_0^{\infty} Q(N>s)ds\notag\\
&=\left(2x_0^{\beta}\frac{1-e^{-\delta |\phi_2(2\beta)|}}{|\phi_1(\beta)||\phi_2(2\beta)|}+2x_0^{\beta}\frac{e^{-\delta |\phi_2(2\beta)|}}{|\phi_1(\beta)|(|\phi_2(2\beta)|-\eta)}-\alpha^2\right)\cdot\left(\delta+\frac{1}{\eta}\right).
\end{align*}
Let $\eta^*:=|\phi_2(2\beta)|/2$, which is the rate in the optimal randomization distribution.  We   set the parameters by $x_0=1$, $\mu=0.1$, $\sigma=0.35$, $c=0.6$, $\beta=0.5$.
In this case, $\alpha=1.7689$, $\eta^*=0.55$, $s^{\ast\ast}=4.7971$, and the minimum work-variance product value is  $p(s^{\ast\ast}, \eta^*)=0.68358$.

\begin{figure}[tb]
\centering
\includegraphics[scale=.22]{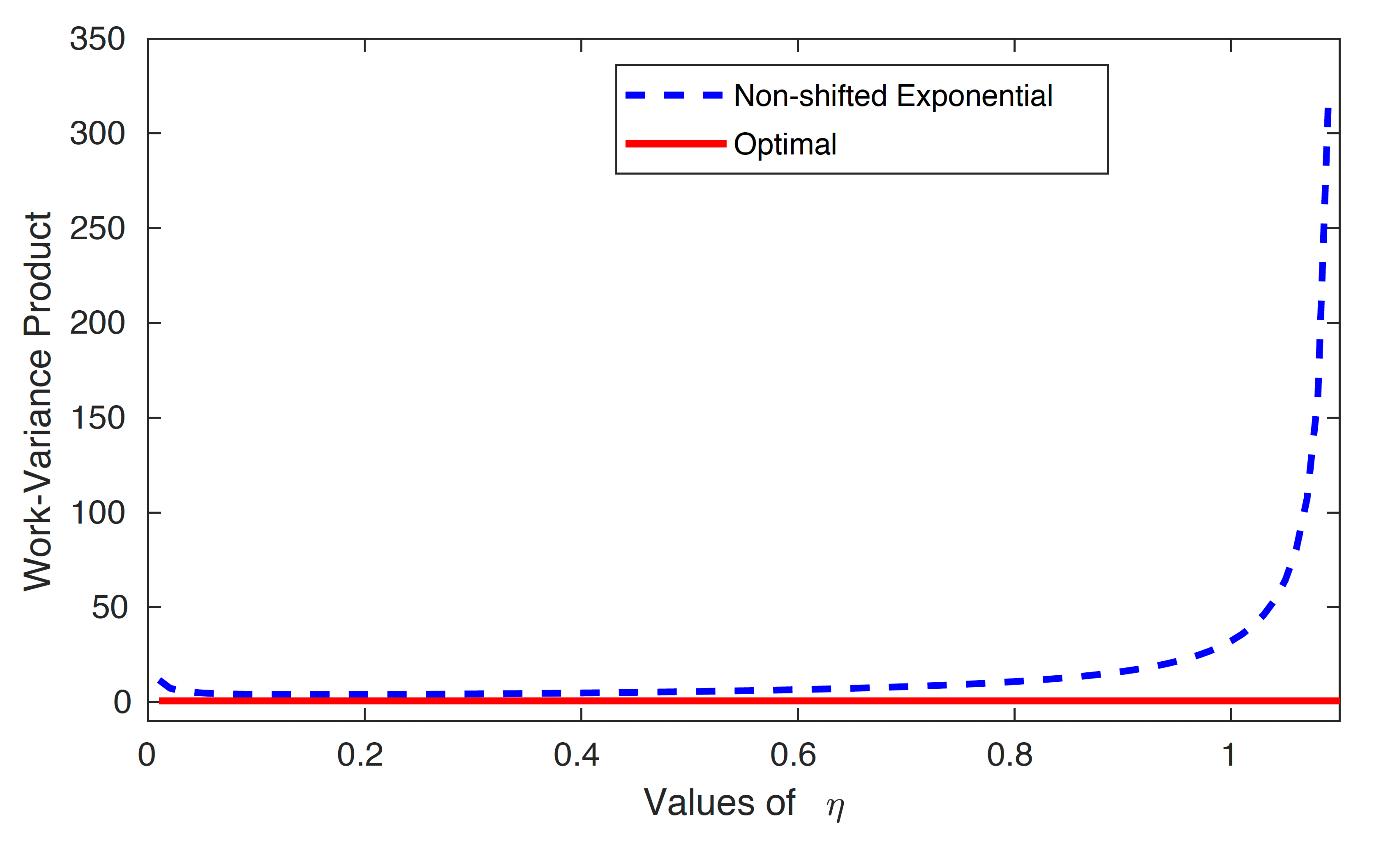}
\caption{Comparison of non-shifted exponential to optimal distribution by varying $\eta$}
\label{fig:comparison1}
\end{figure}

In Figure \ref{fig:comparison1}, the red line is the minimum work-variance product, and the blue line is the work-variance product function of a non-shifted exponential distribution, i.e., $p(0,\eta)$ with $\eta \in [0.01, |\phi_2(2\beta)|]$. We can see the variance-work product of the non-shifted exponential distribution family is strictly larger than the minimum work-variance product, which is consistent with the Theorem \ref{theo2} result that the support of the optimal randomized distribution is always shifted away from zero. The work-variance product increases tremendously when $\eta$ grows larger than $0.6$. A large $\eta$ would lead to a light tail for the survival function, which causes a large variance.

\text{ }
\begin{figure}[]
\centering
         \includegraphics[scale=.85]{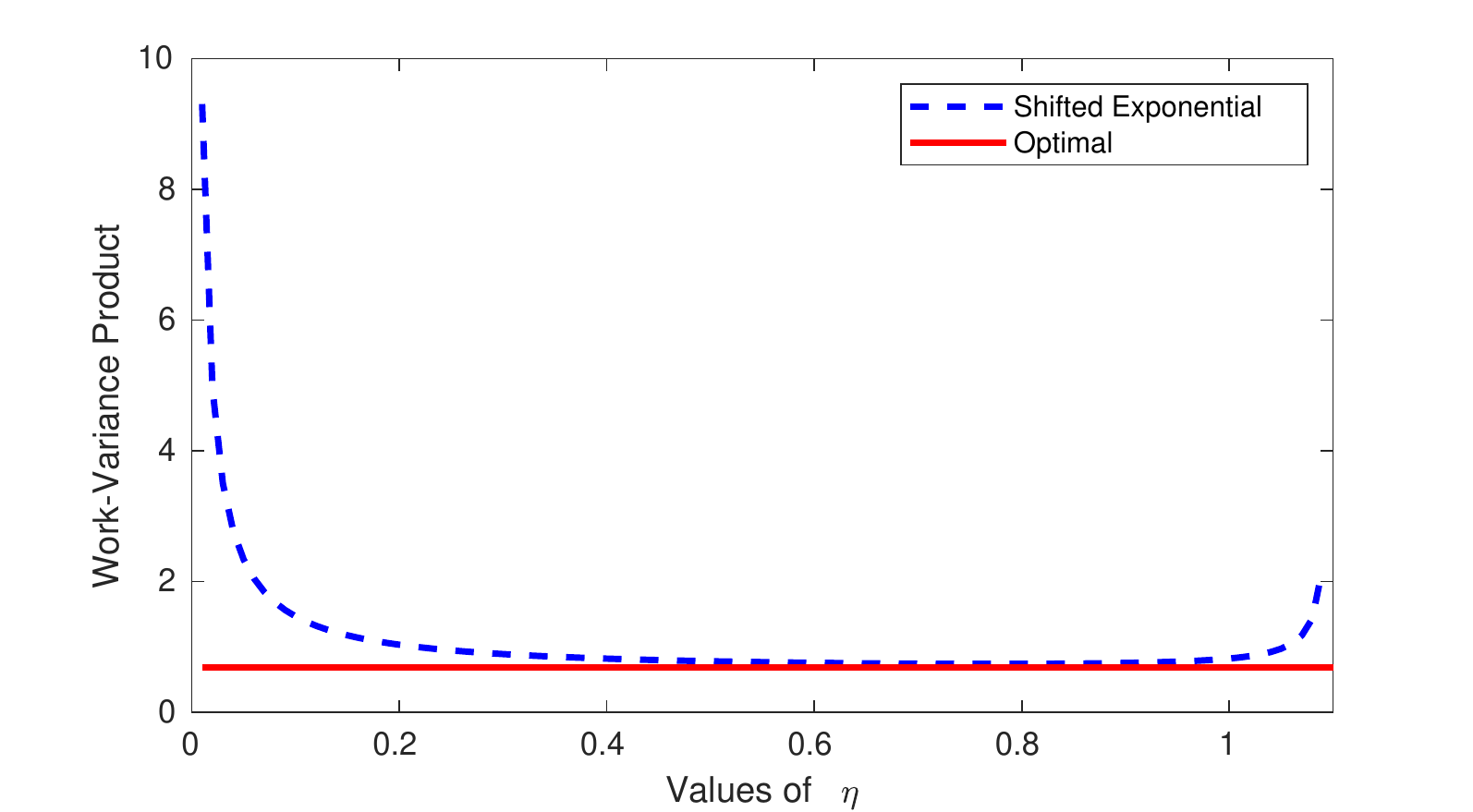}
          \includegraphics[scale=.85]{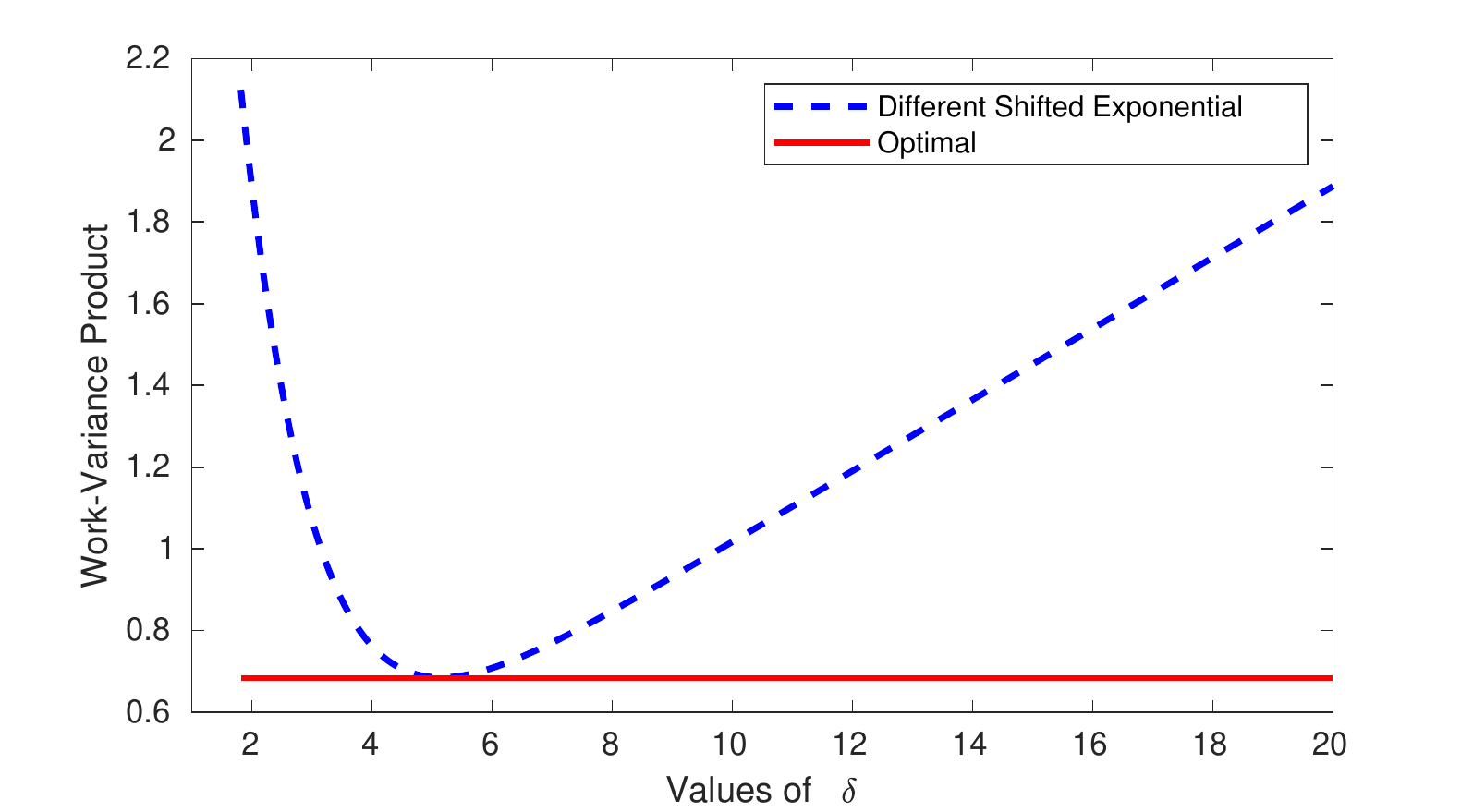}
         \caption{Comparison of shifted exponential to the optimal distribution by varying $\eta$ and $\delta$}
          \label{fig:comparison2}
\end{figure}


In Figure \ref{fig:comparison2}, the blue line in the top graph plots the work-variance product function  $p(s^*,\eta)$
with $\eta \in [0.01, |\phi_2(2\beta)|]$, while the blue line in the bottom graph is the work-variance product function $p(\delta,\eta^*)$ with $\delta \in [2/|\phi_2(2\beta)|, 20]$. We can see the two work-variance product functions deviate from the optimal value  $p(s^*,\eta^*)$ except at the optimal point. The right panel in Figure \ref{fig:comparison2} also substantiates the uniqueness of $s^*$ in Theorem \ref{theo2}.

Then, we plot the threshold level $w(m)$ given by Proposition \ref{p2} for this example. From Figure \ref{fig:comparison4}, we can see that even the optimal weight for all computational budget $m$, which is the level of threshold $w(m)$, is less than $0.13$. This indicates that the advantage of the optimal randomized estimator over the fixed truncation estimator can only be justified under the scenario where the bias is the paramount concern.

\begin{figure}[]
\centering
         \includegraphics[scale=.85]{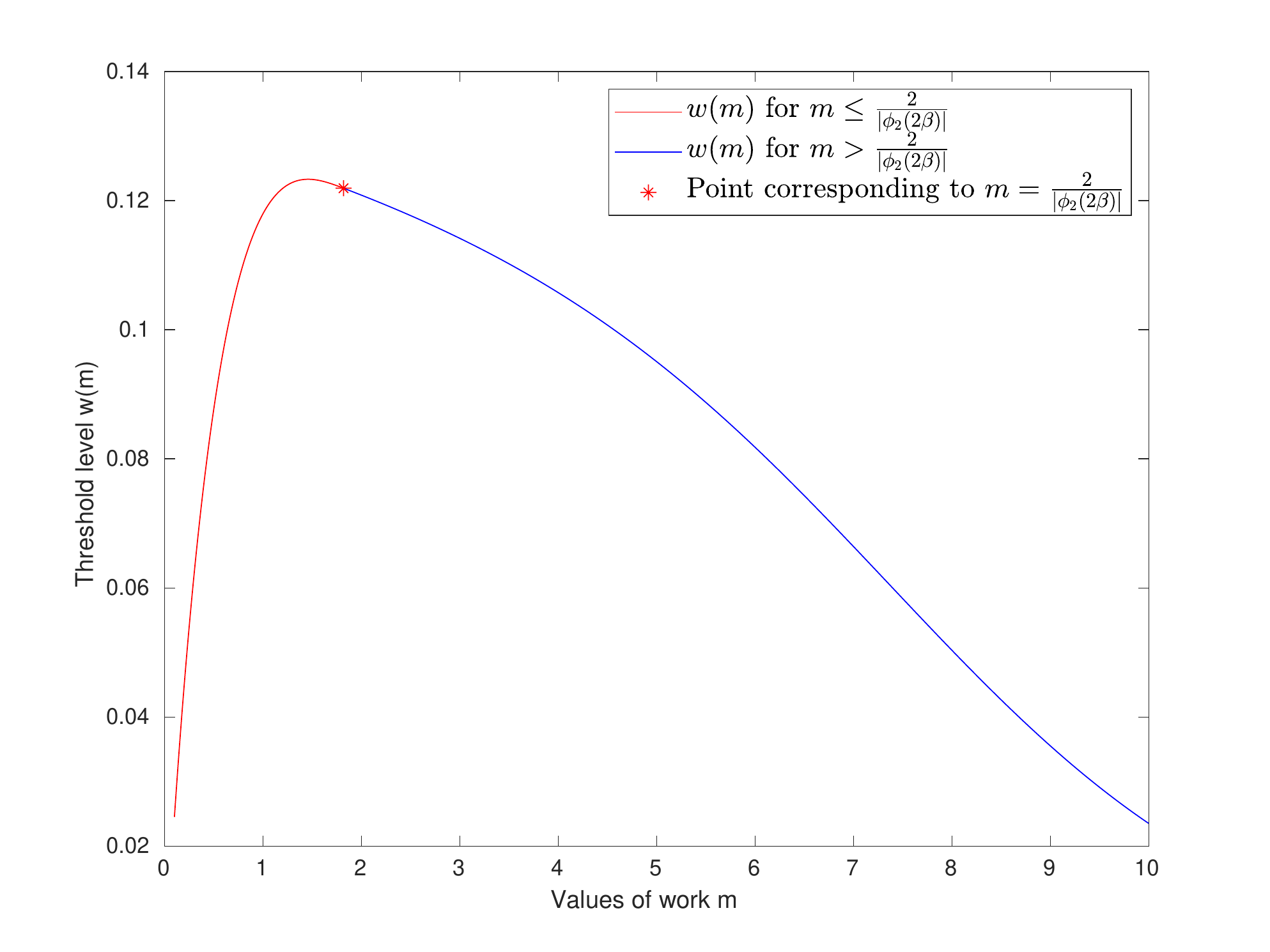}
         \caption{MSE comparison between the fixed truncation estimator and the optimal randomized estimator}
            \label{fig:comparison4}
\end{figure}

For $w=1$ in $U_w(I_m)$, this utility corresponds to the mean squared error (MSE), which is a widely used metric for the efficiency of an estimator. Figure \ref{fig:comparison4} implies that the MSE of the optimal randomized estimator is always larger than the MSE of the fixed truncation estimator. Moreover, we prove in the Appendix that this conclusion holds for all exponential L\'{e}vy processes.
The numerical results for the CIR process can be found in the appendix, which are similar to those for the exponential L\'{e}vy process.

\section{Conclusion\label{s6}}

In this paper, we propose a randomized unbiased estimator for simulating an expected 
cumulative cost/reward.
We derive an explicit form for the optimal distribution of an unbiased randomized estimator
balancing the trade-off between variance and computational cost. 
The optimal distributions are in a shifted distribution class.
To the best of the authors' knowledge, this is the first work resulting in explicit forms for the optimal randomization distribution.
For a discounted continuous cumulative cost contingent on an exponential L\'{e}vy process, the optimal randomization distributions are shifted exponential distributions.
The explicit structure of the distribution function of the optimal random truncation level is particularly useful for ``post-estimation" analysis, and allows us to carry out a full diagnosis of the bias-variance tradeoff.
Moreover, we justify the advantage of the optimal randomized estimator via a utility function taking both bias and  variance into consideration.
Our results are limited to simulating expected cumulative cost/reward.
Future research lies in deriving the optimal randomized distribution and threshold level in the utility function for more general 
RUMC problems. Optimal control theory offers a new perspective to address such RUMC problems.

\section*{Acknowledgments.}
We are grateful to the editor and two anonymous referees for their
careful reading of the paper and very helpful suggestions.
This work was supported in part by the National Science Foundation (NSF) under Grants CMMI-1362303, CMMI-1434419, and 
DMS-1613164, by the National Science Foundation of China (NSFC) under Grants 71901003, 71720107003, 91846301, 71790615, 71690232, and by 
the Air Force of Scientific Research (AFOSR) under Grant FA9550-15-10050.

\bibliographystyle{elsarticle-harv} 
\bibliography{random}



\section*{Appendix}
\subsection*{Optimal Randomization for Non-Monotone $\Gamma(s)$}

Let $z(s):=Q(N>s)$. Then, finding the optimal distribution can be viewed as finding the state corresponding to the following optimal control problem:
\begin{align*}&\sup_{u(s)\in(-\infty, 0]}\int_0^\infty -\left(\frac{2\Gamma(s)}{z(s)}+\lambda z(s)\right) ds\\
s.t.~& \dot{z}(s)=u(s),\quad z(0)=1,\quad \lim_{t\to\infty} z(t)=0.
\end{align*}
Notice that the constraint $u(s)\in(-\infty, 0]$ makes sure the state $z(s)$  is non-increasing.
The maximum principle gives a necessary condition of the optimal control:
\begin{align*} &\dot{p}(s)=-\frac{2\Gamma(s)}{z^2(s)}+\lambda,\\
&u^*(s)=\arg \max_{-\infty<u\leq 0} H(z(s),u(s),p(t),s),
\end{align*}
where 
$$ H(z(s),u(s),p(t),s):=-\left(\frac{2\Gamma(s)}{z(s)}+\lambda z(s)\right)+p(s) u.$$
The necessary and sufficient condition for the optimal control can be given by the following Hamilton-Jacobi-Bellman (HJB) partial differential equation:
\begin{align*}\dot{V}(z,s)+\min_u \left\{ \nabla_z V(z,s) u +\left(\frac{2\Gamma(s)}{z}+\lambda z\right)\right\}=0,
\end{align*}
subject to the terminal condition
$\lim_{t\to\infty}V(x,t)=0$.

%
%
\subsection*{Proof of Proposition \ref{compare}}

\begin{proof}
 If $m>2/|\phi_2(2\beta)|$,
 we have
\begin{align*}
\text{MSE}_{1}-\text{MSE}_{2}
&=\frac{2(1-e^{-m|\phi_{2}(2\beta)|})}{|\phi_{1}(\beta)||\phi_{2}(2\beta)|}
+\frac{2e^{-m|\phi_{2}(2\beta)|}-2e^{-m|\phi_{1}(\beta)|}}{|\phi_{1}(\beta)|(|\phi_{2}(2\beta)|-|\phi_{1}(\beta)|)}
\\
&\quad -\left(\frac{1-e^{-m|\phi_{1}(\beta)|}}{|\phi_{1}(\beta)|}\right)^{2}
+\frac{e^{-2|\phi_{1}(\beta)|m}}{|\phi_{1}(\beta)|^{2}}+\frac{1}{|\phi_{1}(\beta)|^{2}}
-\frac{2+2e^{-m|\phi_2(2\beta)|+2}}{|\phi_1(\beta)||\phi_2(2\beta)|}.
\end{align*}
Note that we can expand the term
\begin{align*}
\left(\frac{1-e^{-m|\phi_{1}(\beta)|}}{|\phi_{1}(\beta)|}\right)^{2}
&=\frac{1}{|\phi_{1}(\beta)|^{2}}-\frac{2e^{-m|\phi_1(\beta)|}}{|\phi_{1}(\beta)|^{2}}+\frac{e^{-2m|\phi_1(\beta)|}}{|\phi_{1}(\beta)|^{2}}.
\end{align*}

Plugging in this expression into the above, we  can further simplify the above expression to
\begin{align*}
&\text{MSE}_{1}-\text{MSE}_{2}
\\
&=\frac{2(1-e^{-m|\phi_{2}(2\beta)|})}{|\phi_{1}(\beta)||\phi_{2}(2\beta)|}
+\frac{2e^{-m|\phi_{2}(2\beta)|}-2e^{-m|\phi_{1}(\beta)|}}{|\phi_{1}(\beta)|(|\phi_{2}(2\beta)|-|\phi_{1}(\beta)|)}
+\frac{2e^{-m|\phi_1(\beta)|}}{|\phi_{1}(\beta)|^{2}}
-\frac{2+2e^{-m|\phi_2(2\beta)|+2}}{|\phi_1(\beta)||\phi_2(2\beta)|}\notag\\
&=\frac{2}{|\phi_1(\beta)|} \left(  \frac{1-e^{-m|\phi_{2}(2\beta)|}}{|\phi_{2}(2\beta)|}
+\frac{e^{-m|\phi_{2}(2\beta)|}-e^{-m|\phi_{1}(\beta)|}}{|\phi_{2}(2\beta)|-|\phi_{1}(\beta)|}
+\frac{e^{-m|\phi_1(\beta)|}}{|\phi_{1}(\beta)|} -\frac{1+e^{-m|\phi_2(2\beta)|+2}}{|\phi_2(2\beta)|} \right)\notag\\
&=\frac{2}{|\phi_1(\beta)|} \left(  \frac{-e^{-m|\phi_{2}(2\beta)|}-e^{-m|\phi_2(2\beta)|+2}}{|\phi_{2}(2\beta)|}
+\frac{e^{-m|\phi_{2}(2\beta)|}-e^{-m|\phi_{1}(\beta)|}}{|\phi_{2}(2\beta)|-|\phi_{1}(\beta)|}
+\frac{e^{-m|\phi_1(\beta)|}}{|\phi_{1}(\beta)|} \right).
\end{align*}

Then we group the above terms by those related to $e^{-m|\phi_1(\beta)|}$ and those related to $e^{-m|\phi_{2}(2\beta)|}$, and we have
\begin{align*}
\text{MSE}_{1}-\text{MSE}_{2}
&=\frac{2}{|\phi_1(\beta)|} \left( e^{-m|\phi_{2}(2\beta)|} \left(  \frac{-1-e^2}{|\phi_2(2\beta)|} +\frac{1}{|\phi_{2}(2\beta)|-|\phi_{1}(\beta)|}  \right) \right.\notag\\
 &\left. \quad \quad +  e^{-m|\phi_1(\beta)|} \left(  \frac{1}{|\phi_1(\beta)|}-\frac{1}{|\phi_{2}(2\beta)|-|\phi_{1}(\beta)|} \right)\right)
\end{align*}

From the L\'{e}vy-Khintchine formula and Jensen's inequality, it is easy to establish that $|\phi_2(2\beta)|<2|\phi_1(\beta)|$ always holds. Thus we have
\begin{align*}
&\frac{1+e^2}{|\phi_2(2\beta)|}>\frac{1}{|\phi_1(\beta)|},
\end{align*}
or equivalently we have
\begin{align*}
&\frac{-1-e^2}{|\phi_2(2\beta)|}<-\frac{1}{|\phi_1(\beta)|}.
\end{align*}

Using this fact, we have
\begin{align}
\text{MSE}_{1}-\text{MSE}_{2}
&=\frac{2}{|\phi_1(\beta)|} \left( e^{-m|\phi_{2}(2\beta)|} \left(  \frac{-1-e^2}{|\phi_2(2\beta)|} +\frac{1}{|\phi_{2}(2\beta)|-|\phi_{1}(\beta)|}  \right) \right.\notag\\
 &\left. \quad \quad +  e^{-m|\phi_1(\beta)|} \left(  \frac{1}{|\phi_1(\beta)|}-\frac{1}{|\phi_{2}(2\beta)|-|\phi_{1}(\beta)|} \right)\right)\notag\\
 &< \frac{2}{|\phi_1(\beta)|} \left( e^{-m|\phi_{2}(2\beta)|} \left(  -\frac{1}{|\phi_1(\beta)|} +\frac{1}{|\phi_{2}(2\beta)|-|\phi_{1}(\beta)|}  \right) \right.\notag\\
 &\left. \quad \quad +  e^{-m|\phi_1(\beta)|} \left(  \frac{1}{|\phi_1(\beta)|}-\frac{1}{|\phi_{2}(2\beta)|-|\phi_{1}(\beta)|} \right)\right)\notag\\
 &=\frac{2}{|\phi_1(\beta)|} \left( \frac{1}{|\phi_1(\beta)|}-   \frac{1}{|\phi_{2}(2\beta)|-|\phi_{1}(\beta)|}   \right)
 \cdot\left( e^{-m|\phi_1(\beta)|} - e^{-m|\phi_{2}(2\beta)|} \right).\label{eq1}
\end{align}

We further divide the discussion into two cases:
\begin{enumerate}
\item[] If $|\phi_{2}(2\beta)|-|\phi_{1}(\beta)|>0$, then clearly we have $\frac{1}{|\phi_1(\beta)|}-   \frac{1}{|\phi_{2}(2\beta)|-|\phi_{1}(\beta)|} <0$, and $e^{-m|\phi_1(\beta)|} - e^{-m|\phi_{2}(2\beta)|}>0$, thus the right hand side of (\ref{eq1}) is negative.
\item[] If $|\phi_{2}(2\beta)|-|\phi_{1}(\beta)|<0$, then clearly we have $\frac{1}{|\phi_1(\beta)|}-   \frac{1}{|\phi_{2}(2\beta)|-|\phi_{1}(\beta)|} >0$, and $e^{-m|\phi_1(\beta)|} - e^{-m|\phi_{2}(2\beta)|}<0$, thus the right hand side of (\ref{eq1}) is negative.
\end{enumerate}

Above all, we have proved that we always have $\text{MSE}_{1}-\text{MSE}_{2}<0$.

\vspace{0.5cm}

 If $m\leq 2/|\phi_2(2\beta)|$,  we have
\begin{align*}
\text{MSE}_{1}-\text{MSE}_{2}
&=\frac{2(1-e^{-m|\phi_{2}(2\beta)|})}{|\phi_{1}(\beta)||\phi_{2}(2\beta)|}
+\frac{2e^{-m|\phi_{2}(2\beta)|}-2e^{-m|\phi_{1}(\beta)|}}{|\phi_{1}(\beta)|(|\phi_{2}(2\beta)|-|\phi_{1}(\beta)|)}\\
&\qquad -\left(\frac{1-e^{-m|\phi_{1}(\beta)|}}{|\phi_{1}(\beta)|}\right)^{2}
+\frac{e^{-2|\phi_{1}(\beta)|m}}{|\phi_{1}(\beta)|^{2}}
 +\frac{1}{|\phi_{1}(\beta)|^{2}}
-\frac{8}{m}\frac{1}{|\phi_1(\beta)| |\phi_2(2\beta)|^2}.
\end{align*}
Note that we can expand the term
\begin{align*}
\left(\frac{1-e^{-m|\phi_{1}(\beta)|}}{|\phi_{1}(\beta)|}\right)^{2}
&=\frac{1}{|\phi_{1}(\beta)|^{2}}-\frac{2e^{-m|\phi_1(\beta)|}}{|\phi_{1}(\beta)|^{2}}+\frac{e^{-2m|\phi_1(\beta)|}}{|\phi_{1}(\beta)|^{2}}.
\end{align*}

Plugging in this expression into the above, we  can further simplify the above expression to \small
\begin{align*}
\text{MSE}_{1}-\text{MSE}_{2}
&=\frac{2(1-e^{-m|\phi_{2}(2\beta)|})}{|\phi_{1}(\beta)||\phi_{2}(2\beta)|}
+\frac{2e^{-m|\phi_{2}(2\beta)|}-2e^{-m|\phi_{1}(\beta)|}}{|\phi_{1}(\beta)|(|\phi_{2}(2\beta)|-|\phi_{1}(\beta)|)}\\
&\quad +\frac{2e^{-m|\phi_1(\beta)|}}{|\phi_{1}(\beta)|^{2}}
-\frac{8}{m}\frac{1}{|\phi_1(\beta)| |\phi_2(2\beta)|^2}\notag\\
&=\frac{2}{|\phi_1(\beta)|} \left(  \frac{1-e^{-m|\phi_{2}(2\beta)|}}{|\phi_{2}(2\beta)|}
+\frac{e^{-m|\phi_{2}(2\beta)|}-e^{-m|\phi_{1}(\beta)|}}{|\phi_{2}(2\beta)|-|\phi_{1}(\beta)|}\right.\\
&\qquad \qquad \qquad \left.+\frac{e^{-m|\phi_1(\beta)|}}{|\phi_{1}(\beta)|}
-\frac{4}{m |\phi_2(2\beta)|^2} \right)\notag\\
&<\frac{2}{|\phi_1(\beta)|} \left(  \frac{1-e^{-m|\phi_{2}(2\beta)|}}{|\phi_{2}(2\beta)|}
+\frac{e^{-m|\phi_{2}(2\beta)|}-e^{-m|\phi_{1}(\beta)|}}{|\phi_{2}(2\beta)|-|\phi_{1}(\beta)|}\right.\\
&\left.\qquad \qquad \qquad +\frac{e^{-m|\phi_1(\beta)|}}{|\phi_{1}(\beta)|}
-\frac{2}{ |\phi_2(2\beta)|} \right)\notag\\
&=\frac{2}{|\phi_1(\beta)|} \left(  \frac{-1-e^{-m|\phi_{2}(2\beta)|}}{|\phi_{2}(2\beta)|}
+\frac{e^{-m|\phi_{2}(2\beta)|}-e^{-m|\phi_{1}(\beta)|}}{|\phi_{2}(2\beta)|-|\phi_{1}(\beta)|}
+\frac{e^{-m|\phi_1(\beta)|}}{|\phi_{1}(\beta)|}\right)\notag\\
&<\frac{2}{|\phi_1(\beta)|} \left(  \frac{-2e^{-m|\phi_{2}(2\beta)|}}{|\phi_{2}(2\beta)|}
+\frac{e^{-m|\phi_{2}(2\beta)|}-e^{-m|\phi_{1}(\beta)|}}{|\phi_{2}(2\beta)|-|\phi_{1}(\beta)|}
+\frac{e^{-m|\phi_1(\beta)|}}{|\phi_{1}(\beta)|}\right),
\end{align*}\normalsize
where in the second last inequality, we have utilized the assumption that $m  |\phi_2(2\beta)|<2$, and in the last inequality we have used the fact that $e^{-m|\phi_{2}(2\beta)|}<1$.

Then we group the above terms by those related to $e^{-m|\phi_1(\beta)|}$ and those related to $e^{-m|\phi_{2}(2\beta)|}$:
\begin{align*}
\text{MSE}_{1}-\text{MSE}_{2}
&<\frac{2}{|\phi_1(\beta)|} \left( e^{-m|\phi_{2}(2\beta)|} \left(  \frac{-2}{|\phi_2(2\beta)|} +\frac{1}{|\phi_{2}(2\beta)|-|\phi_{1}(\beta)|}  \right) \right.\notag\\
 &\left. \quad \quad +  e^{-m|\phi_1(\beta)|} \left(  \frac{1}{|\phi_1(\beta)|}-\frac{1}{|\phi_{2}(2\beta)|-|\phi_{1}(\beta)|} \right)\right)
\end{align*}

From the L\'{e}vy-Khintchine formula and the Jensen inequality, it is easy to establish that $|\phi_2(2\beta)|<2|\phi_1(\beta)|$ always holds. Thus we have
\begin{align*}
&\frac{-2}{|\phi_2(2\beta)|}<-\frac{1}{|\phi_1(\beta)|}.
\end{align*}

Thus we have
\begin{align}
\text{MSE}_{1}-\text{MSE}_{2}
&<
\frac{2}{|\phi_1(\beta)|} \left( e^{-m|\phi_{2}(2\beta)|} \left(  \frac{-2}{|\phi_2(2\beta)|} +\frac{1}{|\phi_{2}(2\beta)|-|\phi_{1}(\beta)|}  \right) \right.\notag\\
 &\left. \quad \quad +  e^{-m|\phi_1(\beta)|} \left(  \frac{1}{|\phi_1(\beta)|}-\frac{1}{|\phi_{2}(2\beta)|-|\phi_{1}(\beta)|} \right)\right)\notag\\
 &<
\frac{2}{|\phi_1(\beta)|} \left( e^{-m|\phi_{2}(2\beta)|} \left(  -\frac{1}{|\phi_1(\beta)|} +\frac{1}{|\phi_{2}(2\beta)|-|\phi_{1}(\beta)|}  \right) \right.\notag\\
 &\left. \quad \quad +  e^{-m|\phi_1(\beta)|} \left(  \frac{1}{|\phi_1(\beta)|}-\frac{1}{|\phi_{2}(2\beta)|-|\phi_{1}(\beta)|} \right)\right)\notag\\
 &=\frac{2}{|\phi_1(\beta)|} \left( \frac{1}{|\phi_1(\beta)|}-   \frac{1}{|\phi_{2}(2\beta)|-|\phi_{1}(\beta)|}   \right)
\cdot\left( e^{-m|\phi_1(\beta)|} - e^{-m|\phi_{2}(2\beta)|} \right).\label{eq2}
 \end{align}

Note that the right hand side of (\ref{eq2}) is exactly the same as the right hand side of (\ref{eq1}), thus following similar arguments, we can establish that $MSE_1-MSE_3<0$. This completes the proof. 
\end{proof}

\subsection*{Derivations for the CIR process}
We can compute that
\begin{align*}
\alpha=\int_0^{\infty}e^{-cs} \E[X_s]ds&=\int_0^{\infty}e^{-cs} \left(X_0 e^{-\kappa s} +\theta (1-e^{-\kappa s}) \right)ds
=\frac{\theta}{c}+\frac{X_0-\theta}{\kappa+c}.
\end{align*}

We have the following expression for its cross moment for $s<t$
\begin{align*}
\E[X_s X_t]=&\theta^2+e^{-\kappa t}(X_0-\theta)\left(\theta+\frac{\sigma^2}{\kappa}\right) +e^{-\kappa s}\theta(X_0-\theta)\\
&+e^{-\kappa(t+s)}\left( (\theta-X_0)^2 +\frac{\sigma^2}{2\kappa}(\theta-2X_0) \right) +\frac{\theta\sigma^2}{2\kappa}e^{-\kappa(t-s)}.
\end{align*}

Define the process $Y_t:=e^{-c t} X_t$, then we have
\begin{align*}
\E[Y_s Y_t]&=e^{-c  (t+s)}\E[X_s X_t]\notag\\
&=\theta^2e^{-c  (t+s)} +e^{-c  s-(\kappa+c ) t}(X_0-\theta)\left(\theta+\frac{\sigma^2}{\kappa}\right)
+e^{-(\kappa+c ) s-c  t}\theta(X_0-\theta)\notag\\
&\quad +e^{-(\kappa+c )(t+s)}\left( (\theta-X_0)^2 +\frac{\sigma^2}{2\kappa}(\theta-2X_0) \right)
+\frac{\theta\sigma^2}{2\kappa}e^{-(c +\kappa)t-(c -\kappa)s}.
\end{align*}

For $f(x)=x$,  we have
\begin{align}
\Gamma(s)&=\int_s^{\infty} \E[e^{-c  s}f(X_s)e^{-c  t} f(X_t)]=\int_s^{\infty} \E[Y_s Y_t]dt\notag\\
&=\frac{\theta^2}{c }e^{-2c  s} +\frac{e^{-(\kappa +2c )s}}{\kappa+c }(X_0-\theta)\left(\theta+\frac{\sigma^2}{\kappa}\right)+\frac{e^{-(\kappa+2c ) s}}{c }\theta(X_0-\theta)\notag\\
& +\frac{e^{-2(\kappa+c )s}}{\kappa+c }\left( (\theta-X_0)^2 +\frac{\sigma^2}{2\kappa}(\theta-2X_0) \right)
+\frac{\theta\sigma^2}{2\kappa}\frac{e^{-2c  s}}{\kappa+c }.\label{gamma_cir}
\end{align}

 Here we have to determine the sufficient conditions to be imposed onto the parameters in order to have the Assumption \ref{as1} to be satisfied. A sufficient condition is given by
\begin{align*}
\theta(X_0-\theta)&>0, \quad (\theta-X_0)^2 +\frac{\sigma^2}{2\kappa}(\theta-2X_0) >0,
\end{align*}
which is equivalent to requiring
\begin{align*}
X_0&>\theta+\frac{\sigma^2}{2\kappa} +\sqrt{\frac{\sigma^2}{2\kappa} \left(\theta+\frac{\sigma^2}{2\kappa}\right)}.
\end{align*}

Then we  calculate the ingredients for the determination of the optimal randomization distribution. For example, in the case of solving the optimization problem (\ref{prob1}), from the result in Theorem \ref{main1}, we have
\begin{equation*}
Q^{\ast}(N>s)=
\begin{cases}
1 &\mbox{$0\leq s\leq s^{\ast}$},
\\
\sqrt{\frac{2(A e^{-2cs}+ B e^{-(\kappa +2c )s} +C e^{-2(\kappa+c)s})}{\lambda}} &\mbox{$s>s^{\ast}$},
\end{cases}
\end{equation*}
where $s^*=\inf\{s\in[0,\infty):~ A e^{-2cs}+ B e^{-(\kappa +2c )s} +C e^{-2(\kappa+c)s}\leq \lambda/2\}$.

In the case of solving the constrained optimization when we are given the expected computational work $m>0$, from the characterization in Theorem \ref{main2}, we have
\begin{equation*}
Q^{\ast}(N>s):=
\begin{cases}
1 &\mbox{ $0\leq s\leq s^{\ast}$},
\\
\sqrt{\frac{A e^{-2cs}+ B e^{-(\kappa +2c )s} +C e^{-2(\kappa+c)s}}{A e^{-2cs^*}+ B e^{-(\kappa +2c )s^*} +C e^{-2(\kappa+c)s^*}}} &\mbox{ $s>s^{\ast}$},
\end{cases}
\end{equation*}
where
\begin{align*}
s^\ast +\frac{\int_{s^\ast}^\infty {\sqrt{ A e^{-2cs}+ B e^{-(\kappa +2c )s} +C e^{-2(\kappa+c)s}} ds}}{\sqrt{A e^{-2cs^*}+ B e^{-(\kappa +2c )s^*} +C e^{-2(\kappa+c)s^*}}}&=m.
\end{align*}

For the minimization of the variance-work product, from Theorem \ref{theo2}, we have
\begin{equation*}
Q^{\ast}(N>s):=
\begin{cases}
1 &\mbox{$0\leq s\leq s^{\ast\ast}$},
\\
\sqrt{\frac{A e^{-2cs}+ B e^{-(\kappa +2c )s} +C e^{-2(\kappa+c)s}}{A e^{-2cs^{\ast\ast}}+ B e^{-(\kappa +2c )s^{\ast\ast}} +C e^{-2(\kappa+c)s^{\ast\ast}}}} &\mbox{ $s>s^{\ast\ast}$},
\end{cases}
\end{equation*}
where  $s^{\ast\ast}$ is the unique positive solution to the following equation:
\begin{align}
&\frac{\alpha^2}{2}+s^{\ast\ast}(A e^{-2cs^{\ast\ast}}+ B e^{-(\kappa +2c )s^{\ast\ast}} +C e^{-2(\kappa+c)s^{\ast\ast}})\notag\\
&-\int_0^{s^{\ast\ast}} (A e^{-2cs}+ B e^{-(\kappa +2c )s} +C e^{-2(\kappa+c)s}) ds=0.\label{simplify2}
\end{align}

We can simplify the equation \eqref{simplify2} as
\begin{align*}
&\frac{\alpha^2}{2}+s^{\ast\ast}(A e^{-2cs^{\ast\ast}}+ B e^{-(\kappa +2c )s^{\ast\ast}} +C e^{-2(\kappa+c)s^{\ast\ast}})\notag\\
&- \left(A \frac{1-e^{-2cs^{\ast\ast}}}{2c}+ B \frac{1-e^{-(\kappa +2c )s^{\ast\ast}}}{\kappa +2c} +C \frac{1-e^{-2(\kappa+c)s^{\ast\ast}}}{2(\kappa+c)}\right)=0.
\end{align*}

To calculate the optimal work-variance product, recall that the optimal level of $m$ is given by
\begin{align*}
m^{\ast\ast}&=s^{\ast\ast} +\frac{\int_{s^{\ast\ast}}^{\infty} \sqrt{A e^{-2cs}+ B e^{-(\kappa +2c )s} +C e^{-2(\kappa+c)s} } ds}{\sqrt{A e^{-2cs^{\ast\ast}}+ B e^{-(\kappa +2c )s^{\ast\ast}} +C e^{-2(\kappa+c)s^{\ast\ast}}}}
\end{align*}

Then the optimal work-variance product in the case of the CIR process is given by
\begin{align*}
&m^{\ast\ast}  \left[ 2\int_0^{s^{\ast\ast}} \Gamma(s)ds +2\sqrt{\Gamma(s^{\ast\ast})}\int_{s^{\ast\ast}}^{\infty}\sqrt{\Gamma(u)}du -\alpha^2 \right]\notag\\
&=m^{\ast\ast}  \left[ \alpha^2+2s^{\ast\ast}\Gamma(s^{\ast\ast})+2\Gamma(s^{\ast\ast})\cdot (m^{\ast\ast}-s^{\ast\ast})-\alpha^2 \right]\notag\\
&=2(m^{\ast\ast})^2 \Gamma(s^{\ast\ast})\notag\\
&=2(m^{\ast\ast})^2  (A e^{-2cs^{\ast\ast}}+ B e^{-(\kappa +2c )s^{\ast\ast}} +C e^{-2(\kappa+c)s^{\ast\ast}}).
\end{align*}
Note that in the second equality above we have utilized the defining equation characterizing $s^{\ast\ast}$. 
\subsection*{Proof of Proposition \ref{cir2}} 
\begin{proof}
We have the following calculations:
\begin{align*}
\mathbb{E}\left[\int_{m}^{\infty}g(X_{s},s)ds\right]&=\int_m^{\infty}e^{-cs} \E[X_s]ds\notag\\
&=\int_m^{\infty}e^{-cs} \left(X_0 e^{-\kappa s} +\theta (1-e^{-\kappa s}) \right)ds\notag\\
&=\frac{\theta}{c}e^{-cm}+\frac{X_0-\theta}{c+\kappa}e^{-(c+\kappa)m},
\end{align*}
and 
\begin{align}
\mbox{Var}\left(\int_{0}^{m}g(X_{s},s)ds\right)
&=\mathbb{E}\left[\left(\int_{0}^{m}g(X_{s},s)ds\right)^{2}\right]
-\left(\mathbb{E}\int_{0}^{m}g(X_{s},s)ds\right)^{2}\notag\\
&=2\int_{0}^{m}\int_{s}^{m}  \E[Y_s Y_t] dtds
-\left(\int_{0}^{m}e^{-cs} \left(X_0 e^{-\kappa s} +\theta (1-e^{-\kappa s}) \right)ds\right)^{2}.\label{cir3}
\end{align}

For the first term of the right hand side of \eqref{cir3}, we have\small
\begin{align*}
&2\int_{0}^{m}\int_{s}^{m}  \E[Y_s Y_t] dtds=\frac{\theta^2}{c}(1-e^{-cm})^2+ (X_0-\theta)\left(\theta+\frac{\sigma^2}{\kappa}\right)\frac{2}{c+\kappa}\notag\\
&\quad \times \left(\frac{1-e^{-(2c+\kappa)m}}{2c+\kappa}-\frac{e^{-(c+\kappa)m}-e^{-(2c+\kappa)m}}{c}\right)\notag\\
&\quad + \frac{2\theta(X_0-\theta)}{c} \left(\frac{1-e^{-(2c+\kappa)m}}{2c+\kappa}-\frac{e^{-cm}-e^{-(2c+\kappa)m}}{\kappa+c}\right)\notag\\
&\quad + \left( (\theta-X_0)^2 +\frac{\sigma^2}{2\kappa}(\theta-2X_0) \right) \frac{(1-e^{-(\kappa+c)m})^2}{\kappa+c}
+\frac{\theta\sigma^2}{\kappa(c+\kappa)}\left(\frac{1-e^{-2c m}}{2c}-\frac{e^{-(c+\kappa)m}-e^{-2cm}}{c-\kappa}\right).
\end{align*}\normalsize

The second term on the right hand side of \eqref{cir3} can be calculated as
\begin{align*}
\left(\int_{0}^{m}e^{-cs} \left(X_0 e^{-\kappa s} +\theta (1-e^{-\kappa s}) \right)ds\right)^{2}
=\left( (X_0-\theta)\frac{1-e^{-(c+\kappa)m}}{c+\kappa} +\theta \frac{1-e^{-cm}}{c} \right)^{2}.
\end{align*}

For the optimal randomized distribution, we have
\begin{align*}
\inf\limits_{Q\in\mathcal{M}(\mathbb{R}^{+}):\mathbb{E}^{Q}[N]=m}
\mbox{Var}\left(\int_{0}^{N}\frac{g(X_{s},s)}{Q(N>s)}ds\right)
&=\mbox{Var}\left(\int_{0}^{N}\frac{g(X_{s},s)}{Q^{\ast}(N>s)}ds\right)\notag\\
&=2\int_0^{\infty}\frac{\Gamma(s)}{Q^{\ast}(N>s)}ds-\left(\mathbb{E}\int_{0}^{\infty}g(X_{s},s)ds\right)^{2}\notag\\
&=2\int_0^{s^{\ast}}\frac{\Gamma(s)}{Q^{\ast}(N>s)}ds+2\int_{s^{\ast}}^{\infty}\frac{\Gamma(s)}{Q^{\ast}(N>s)}ds-\alpha^2.
\end{align*}

We divide the discussion into two cases: \\

(i) If  $m> \int_{0}^{\infty
}\sqrt{\Gamma(u)}du/\sqrt{\Gamma(0)}$, then  we have
\begin{align*}
\inf\limits_{Q\in\mathcal{M}(\mathbb{R}^{+}):\mathbb{E}^{Q}[N]=m}
\mbox{Var}\left(\int_{0}^{N}\frac{g(X_{s},s)}{Q(N>s)}ds\right)
&=\mbox{Var}\left(\int_{0}^{N}\frac{g(X_{s},s)}{Q^{\ast}(N>s)}ds\right)\notag\\
&=2\int_0^{s^{\ast}}\frac{\Gamma(s)}{Q^{\ast}(N>s)}ds+2\int_{s^{\ast}}^{\infty}\frac{\Gamma(s)}{Q^{\ast}(N>s)}ds-\alpha^2\notag\\
&=2\int_0^{s^{\ast}}\Gamma(s) ds+2\sqrt{\Gamma(s^{\ast})}\int_{s^{\ast}}^{\infty}\sqrt{\Gamma(u)}du-\alpha^2\notag\\
&=2\int_0^{s^{\ast}}\Gamma(s) ds+2\Gamma(s^{\ast})(m-s^{\ast})-\alpha^2\notag\\
&=2\left(A \frac{1-e^{-2cs^{\ast}}}{2c}+ B \frac{1-e^{-(\kappa +2c )s^{\ast}}}{\kappa +2c} +C \frac{1-e^{-2(\kappa+c)s^{\ast}}}{2(\kappa+c)}\right)\notag\\
&\quad \quad +2(m-s^{\ast})(A e^{-2cs^{\ast}}+ B e^{-(\kappa +2c )s^{\ast}} +C e^{-2(\kappa+c)s^{\ast}})-\alpha^2,
\end{align*}
where we have utilized the characterization equation of $s^{\ast}$ in the part (i) of the (modified) Theorem 2 for the constrained optimization. 

(ii) If  $m\leq  \int_{0}^{\infty
}\sqrt{\Gamma(u)}du/\sqrt{\Gamma(0)}$, then  we have 
\begin{align*}
\inf\limits_{Q\in\mathcal{M}(\mathbb{R}^{+}):\mathbb{E}^{Q}[N]=m}
\mbox{Var}\left(\int_{0}^{N}\frac{g(X_{s},s)}{Q(N>s)}ds\right)
&=\mbox{Var}\left(\int_{0}^{N}\frac{g(X_{s},s)}{Q^{\ast}(N>s)}ds\right)
\\
&=2\int_0^{\infty}\frac{\Gamma(s)}{Q^{\ast}(N>s)}ds-\alpha^2\notag
\\
&=\frac{2}{m}\left(\int_0^{\infty} \sqrt{\Gamma(u)}du\right)^2-\alpha^2.
\end{align*}
Then it is straightforward to prove the conclusion.
\end{proof}

\subsection*{Numerical Results for CIR Process}
We consider a CIR process, and use the following parameter sets:  $\kappa=3$, $\theta=0.2$, $\sigma=0.3$, $c=0.6$, $x_0=0.5$. 
 In Figure \ref{fig:CIR-MSE1}, the MSE of the optimal randomized estimator is larger than the MSE of the fixed truncation estimator. In Figure \ref{fig:CIR-w1}, the threshold function $w(m)$ is plotted, and we can see that the threshold function first increases and then decreases with the optimal value less than $0.14$. 
 


\begin{figure}[tb]
	\centering
	\includegraphics[scale=.73]{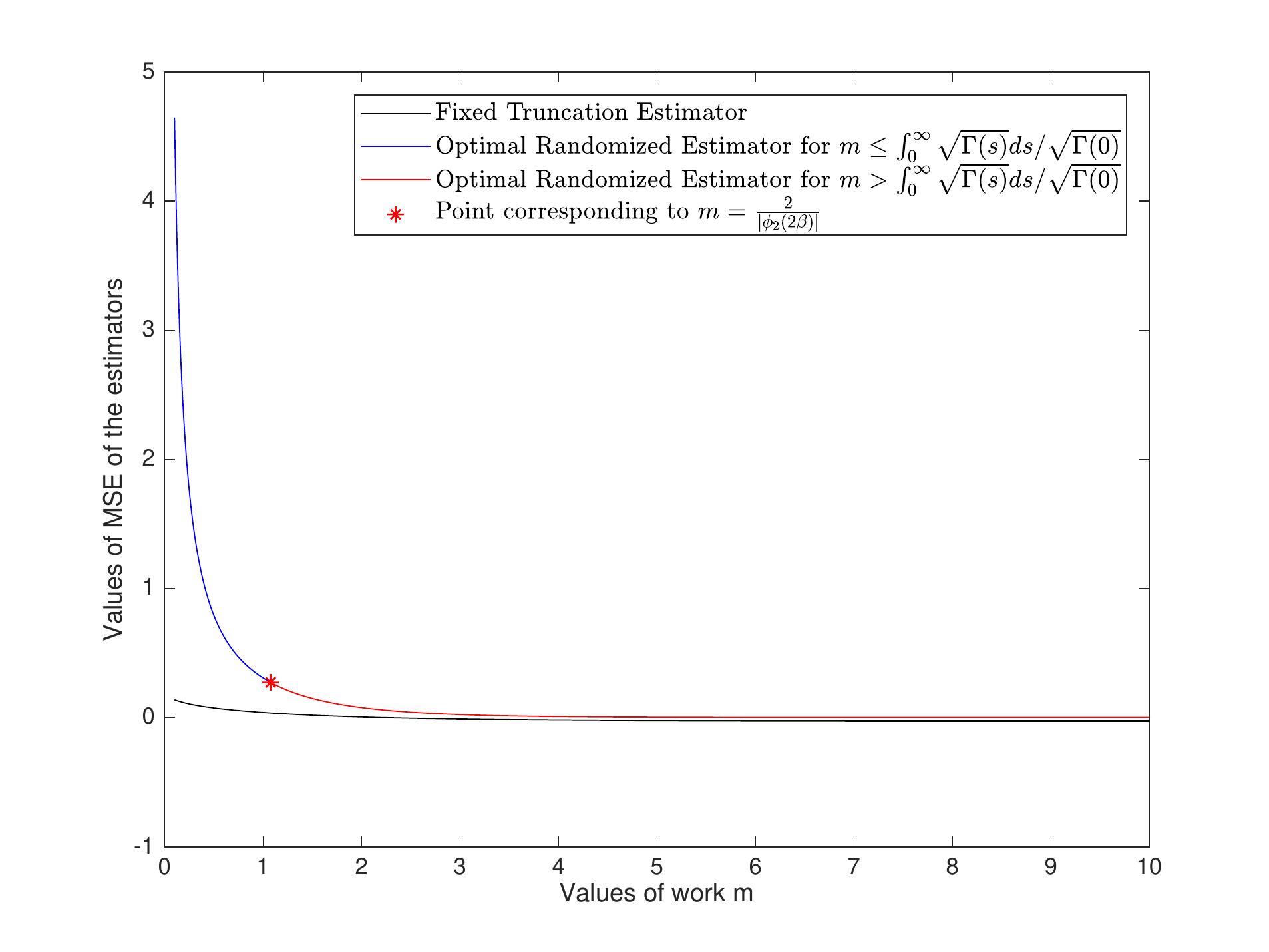}
	\caption{Comparison of MSE of the Optimal randomized estimator to Fixed truncation estimator in the CIR case}
	\label{fig:CIR-MSE1}
\end{figure}

\begin{figure}[tb]
	\centering
	\includegraphics[scale=.78]{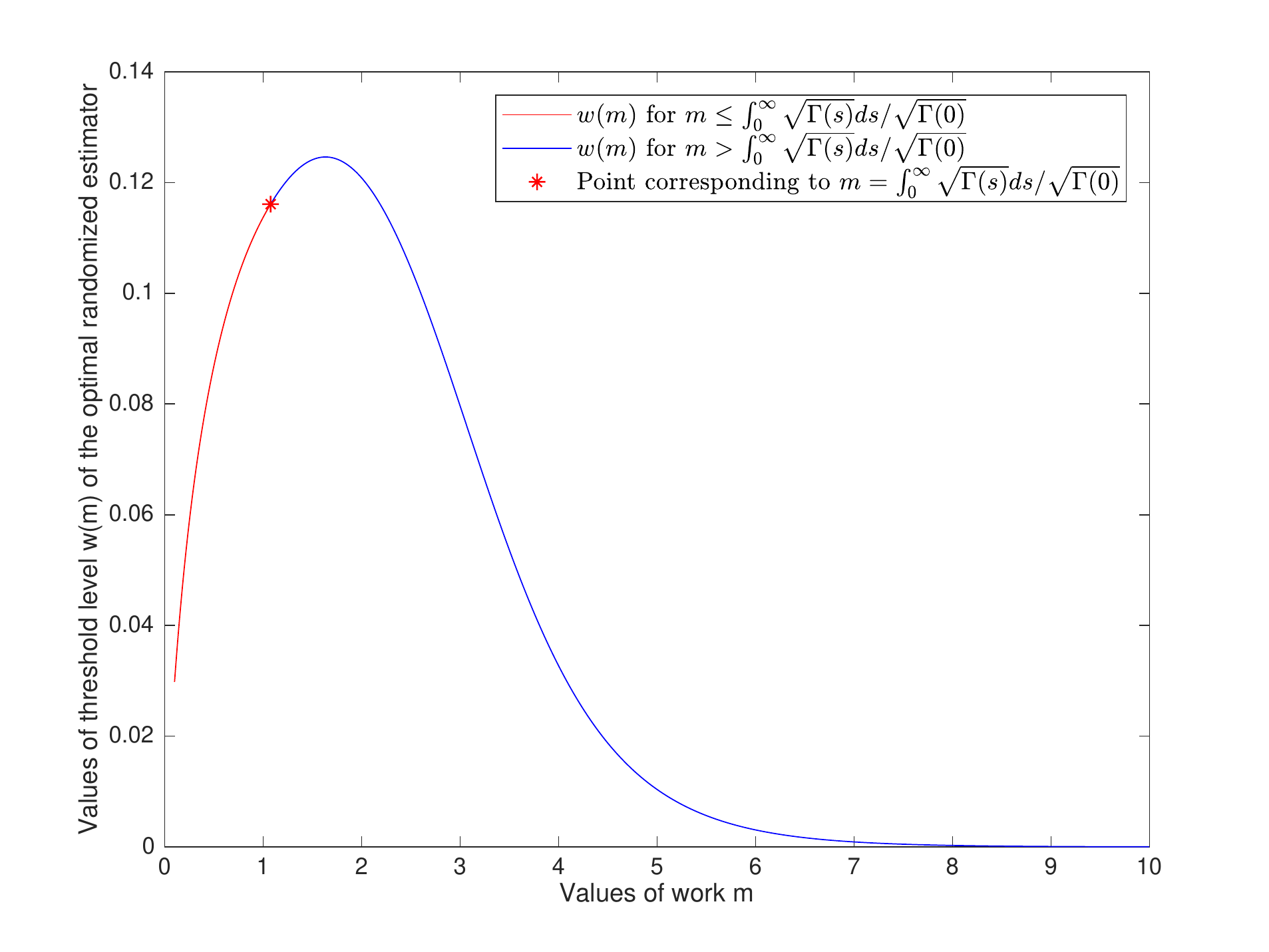}
	\caption{Plot of  the threshold level $w(m)$ of the optimal randomized estimator in the CIR case}
	\label{fig:CIR-w1}
\end{figure}

\end{document}